\numberwithin{equation}{section}
\newtheoremstyle{mytheorem}
{10pt}
{10pt}
{\it}
{\parindent}
{\bf}
{.}
{ }
{\thmnumber{#2.~}\thmname{#1}\thmnote{~\rm#3}}
\newtheoremstyle{mytheorem*}
{10pt}
{10pt}
{\it}
{\parindent}
{\bf}
{.}
{ }
{\thmname{#1}\thmnote{~\rm#3}}
\newtheoremstyle{myremark}
{10pt}
{10pt}
{\rm}
{\parindent}
{\bf}
{.}
{ }
{\thmnumber{#2.~}\thmname{#1}\thmnote{~\rm#3}}
\newtheoremstyle{myparagraph}
{10pt}
{10pt}
{\rm}
{\parindent}
{\bf}
{.}
{ }
{\thmnumber{#2.~}\thmname{#1}\thmnote{#3}}
\theoremstyle{mytheorem}
\newtheorem{theorem}[subsection]{Theorem}
\newtheorem{lemma}[subsection]{Lemma}
\newtheorem{corollary}[subsection]{Corollary}
\theoremstyle{mytheorem*}
\newtheorem{theorem*}[]{Theorem}
\theoremstyle{myremark}
\newtheorem{remark}[subsection]{Remark}
\newtheorem{definition}[subsection]{Definition}
\newtheorem{example}[subsection]{Example}
\theoremstyle{myparagraph}
\newtheorem*{parag*}{}
\def\@secnumfont{\sc}
\def\section{\@startsection%
{section}
{1}
\z@{1.5\linespacing\@plus .2\linespacing}
  {.7\linespacing}
  {\normalfont\sc\centering}}
\renewenvironment{proof}[1][\proofname]{\par 
  \pushQED{\qed}%
  \normalfont \topsep10\p@\@plus6\p@\relax 
  \trivlist 
  \item[\hskip\labelsep 
    \bfseries 
    #1\@addpunct{.}]\ignorespaces 
}{%
  \popQED\endtrivlist\@endpefalse 
} 
\providecommand{\proofname}{Proof}
\newcommand{\R}{\mathbb{R}}
\newcommand{\Haus}{\mathscr{H}}
\newcommand{\bd}{\partial}
\newcommand{\uno}{\mathds{{1}}}
\newcommand{\N}{\mathbb{N}}
\newcommand{\Z}{\mathbb{Z}}
\newcommand{\D}{\mathcal{D}}
\newcommand{\Gfr}{\mathfrak G}
\newcommand{\gfr}{{{\mathfrak g}}}
\newcommand{\gx}{{\mathpzc{g}_{L}}}
\newcommand{\Ll}{\mathcal L}
\newcommand{\ur}{\mathrm u}
\newcommand{\sr}{\mathrm s}
\newcommand{\qr}{\mathrm q}
\newcommand{\prom}{\mathrm p}
\newcommand{\rv}{\mathrm v}
\newcommand{\rr}{\mathrm r}
\newcommand{\reu}{} 
\newcommand{\red}{} 
\DeclareMathAlphabet{\mathpzc}{OT1}{pzc}{m}{it}
\DeclareMathOperator{\acc}{Acc}
\DeclareMathOperator{\setttgh}{SettTanh}
\DeclareMathOperator{\infl}{Inf{}l}
\DeclareMathOperator{\ri}{r.i.}
\DeclareMathOperator{\clos}{clos}
\DeclareMathOperator{\BV}{BV}
\DeclareMathOperator{\pt}{\bd_{\mathit t}}
\DeclareMathOperator{\px}{\bd_{\mathit x}}
\DeclareMathOperator{\ddt}{\frac{d}{d\mathit t}}
\DeclareMathOperator{\dds}{\frac{d}{d\mathit s}}
\newcounter{stepnb}
\newcounter{substepnb}
\newcommand{\firststep}{\setcounter{stepnb}{0}}
\newcommand{\firstsubstep}{\setcounter{substepnb}{0}}
\newcommand{\step}[1]{\vspace{.3\baselineskip}\emph{\addtocounter{stepnb}{1}\noindent\arabic{stepnb}: #1.}  }
\newcommand{\substep}[1]{\vskip.3\baselineskip\emph{\addtocounter{substepnb}{1} \arabic{stepnb}.\arabic{substepnb}: #1.}  }
\begin{document}


\thispagestyle{empty}
~\vskip -1.1 cm

	%
	%
%

	%
	%
{\centering\Large\bf
Eulerian, Lagrangian and broad continuous solutions 
\\
to a balance law with non convex flux II
\par
}

\vspace{.7 cm}

	%
	%
{\centering\sc 
Giovanni Alberti, Stefano Bianchini, Laura Caravenna
\par
}

\vspace{.6 cm}

	%
	%

\vspace{.8 cm}

{\rightskip 1 cm
\leftskip 1 cm
\parindent 0 pt
\footnotesize

	%
	%
{\sc Abstract.}
We consider a \emph{continuous} solution $u$ of the balance law 
\[
\pt u + \px (f(u)) = \gfr
\] 
in one space dimension, where the flux function $f$ is of class $C^2$ and the source term $\gfr$ is bounded.
This equation admits an Eulerian intepretation (namely the distributional one) 
and a Lagrangian intepretation (which can be further specified).
Since $u$ is only continuous, these interpretations do not necessessarily agree; 
moreover each interpretation naturally entails a different equivalence class for the source term $\gfr$. 
In this paper we complete the comparison between these notions of solutions 
started in the companion paper \cite{file1ABC}, and analize in detail the relations between 
the corresponding notions of source term.
\par
\medskip\noindent
{\sc Keywords:} Balance laws, Eulerian formulation, Lagrangian formulation.
\par
\medskip\noindent
{\sc MSC (2020):} 
35L60, 37C10, 58C20, 76N10
\par
}

\tableofcontents

\section{Introduction and statements of the main results}
\label{s1}
In this paper we consider the balance law 
\begin{equation}
\label{EE}
\pt u + \px (f(u)) = g
\, , 
\end{equation}
where the flux function $f:\R\to\R$ is of class $C^2$, 
the source term $g:\R^2\to\R$ is bounded and Borel regular, 
and the solution $u:\R^2\to\R$ is continuous.

\smallskip

If $u$ is of class $C^1$ and~\eqref{EE} holds
in a pointwise sense for every $t,x$ and if $i_{\gamma}$ is an integral curve 
of class $C^1$ of the vectorfield $(1,f'(u))$, the chain rule provides%
\begin{equation}
\label{LE}
(u(i_{\gamma}))' = g(i_{\gamma})
\, ,
\end{equation}
that is, $u(i_{\gamma})$ is a primitive of $g(i_{\gamma})$ whenever $u$ is a classical solution of ~\eqref{EE} and $i_{\gamma}$ a characteristic curve.
\\
Conversely, if $u$ is a function of class $C^1$ and~\eqref{LE}
holds for every characteristic curve $i_{\gamma}$, then $u$ is 
a classical solution of~\eqref{EE}.

\smallskip
We are interested now in the case when $u$ is \emph{continuous}.
What are the possible notions of solutions of the balance law~\eqref{EE}?
\\
On the one hand, $u$ could be an \emph{Eulerian solution}, when~\eqref{EE} is satisfied in a distributional sense: for all Lipschitz continuous functions $\varphi$, compactly supported, then
\[
-\iint
\left(u \pt \varphi +f(u) \px \varphi \right)=\iint \varphi g\,.
\]
\\
On the other hand, $u$ could be a \emph{Lagrangian solution}, when~\eqref{LE} holds along a monotone selection of characteristic curves that provides a change of variables.
This notion of solution could be further strengthened requiring that~\eqref{LE} holds for every characteristic curve $i_{\gamma}$, in which case we call $u$ \emph{Broad solution}.

\smallskip

Depending on what is the meaning of~\eqref{EE}, the source term $g$ changes its meaning. For Eulerian solutions, it is relevant which \emph{distribution $g$ itself identifies}: it will then be irrelevant to change values of $g$ on subsets which are $\Ll^{2}$-negligible in the plane. For Lagrangian solutions, the relevant objects are the \emph{distributions that $g$ identifies} when restricted \emph{along the characteristic curves that are selected} for the change of variables.
For Broad solutions, what matters are \emph{all the distributions that $g$ identifies when restricted along all characteristic curves}.
\\
From now on we denote the source term $g$ with the more peculiar symbol $\gfr$ in order to stress that we consider a pointwise defined function:
the restriction of $\gfr$ on characteristic curves properly defines a distribution, since $\gfr$ is Borel regular. 

\smallskip

In the companion paper \cite{file1ABC} we established the equivalence among different notions of solutions to~\eqref{EE} for general smooth fluxes. We eventually proved that continuous solutions are Kruzkov iso-entropy solutions, which yields uniqueness for the Cauchy problem.
The source term was an active player: the equivalences, or the reductions, were not stated for a given Borel regular source term $\gfr$, they were rather established framing the source term as the proper meaningful object identified by the corresponding notion of solution.
\\
For example, under the sharp assumption that the set of inflection points of the flux $f$ is negligible, we proved that Eulerian solutions to the equation $\pt u + \px (f(u)) = g$ are Broad solutions to the infinitely dimensional system of ODEs $(u(i_{\gamma}))' = \widehat\gfr(i_{\gamma})$, indexed by all characteristics curves $i_{\gamma}$, see \cite[Theorem 37]{file1ABC}.
What was left to a separate analysis is the sharpness of the assumption on inflection points, and the correspondence of the Eulerian source term $g$ and the Broad source term $\widehat\gfr$.

\smallskip

There are striking features that we discover establishing relations among source terms in different formulations.
In particular, we emphasize:
\begin{itemize}
\item[\S~\ref{Ss:Notlipanlongchar}]
Eulerian solutions might fail to be Broad solutions if inflection points are not negligible:  we exhibit a continuous Eulerian solution which is not Lipschitz continuous on some characteristic curves, although it must be Lipschitz continuous on a suitable selection of them, being a Lagrangian solution by \cite[Corollary 46]{file1ABC}.
\emph{Our counterexample has a source term constantly one: the continuity of the source term does not help to this extent}.

\item[\S~\ref{Ss:nondifferentiabilityset}]
We construct a convex flux $f$ and a continuous Eulerian solution $u$, which is then Lipschitz continuous along all characteristic curves \cite[Theorem 30]{file1ABC}. Surprisingly, at points of a compact set $K \subset \mathbb{R}^{2}$ with positive $\mathcal{L}^{2}$-measure, $u$ is not pointwise differentiable along characteristic curves: this phenomenon occurs for all characteristic curves passing through such points.
On $K$, there is no candidate value for the derivative of $u$ along characteristic curves.
The derivative of $u$ along characteristic curves does not provide a function $\gfr$ on the plane, not even almost everywhere: negligible along characteristics does not imply negligible in the plane.
We emphasize that such function $u$ is not Hölder continuous and that this behavior is prevented by the $\alpha$-convexity of the flux~\cite[Theorem~1.2]{estratto}—our flux is only strictly convex.

\item[\S~\ref{S:Cantoparam}] We construct an $\Ll^2$-positive measure set $K$ negligible along characteristic curves also in the case of the quadratic flux $f(u)=u^2$.
Even in this scenario, the set $K$ intersects any characteristic curve of $u$ in at most one point.

{{}Besides showing that, surprisingly, even for the quadratic flux Lagrangian parameterizations can really have a Cantor part, this construction introduces the general machinery used in the more complex counterexample of \S~\ref{Ss:nondifferentiabilityset}.
It also shows that Lagrangian sources might not be Eulerian sources.}

\item[\S~\ref{S:compatibilitysources}]
Despite the counterexamples we described above, comfortingly, when inflection points of $f$ are negligible source terms in the Eulerian and Broad formulation are compatible.
This is the case of analytic fluxes: there exists a Borel, bounded function $\gfr$ that is both the source term for~\eqref{EE} and for~\eqref{LE}, whichever characteristic curve $i_{\gamma}$ one chooses.
We stress that, even with the cubic flux $f(u)=u^3$, due to the non-convexity, \emph{if the Eulerian source is continuous the continuous representative is not necessarily the right Lagrangian source}, see a counterexample in Remark~\ref{Rem:cubico}. 
If the Lagrangian source is a continuous function, instead, then it is the right Eulerian source.
\end{itemize}

We first collect and summarize in~\S~\ref{S:2} precise definitions and statements. 

\begin{remark}
As we discuss local properties, for simplicity of notation we assume that $u$ is defined on all $\R^2$. This is mostly a notational convenience, and it would be entirely similar when $u$ is defined in an open set.
\end{remark}


\section{Description of the setting and of the statements}
\label{S:2}

Let $u:\R^2\to\R$ be a given \emph{continuous} function and $ f:\R\to\R$ a given function of class $C^{2}$, that we call flux function.

We call $z^{*}\in\R$ an inf{}lection point of $f$ if $f''(z^{*})=0$ but $z^{*}$ is neither a local maximum nor a local minimum for $ f(z)-f'(z^{*})(z-z^{*})$.
We denoted by $\infl(f)$ the set of inf{}lection points of $f$, $\clos({\infl(f)})$ is its closure.

By negligibility of inflection points of $f$ we mean the following assumption:
\begin{equation}
\Ll^1(\clos(\infl(f)))=0
\end{equation}
where, with definitions that can be proved to be equivalent, we denote
\begin{equation}
\clos(\infl(f))\equiv\acc( \{f''>0\})\bigcap\acc( \{f''<0\})
\label{ass:h}\,.
\end{equation}
{{}Above $\acc(X)$ denotes the set of accumulation points of a set $X$.}
\nomenclature{$\infl(f)$}{Inflection points of $f$, see Assumption~\eqref{ass:h}}
\nomenclature{$\clos(X)$}{Closure of $X$}
\nomenclature{$\acc(X)$}{{{}Accumulation points of $X$}}
\nomenclature{$\ri(X)$}{{{}The relative interior of a set $X$}}

We review in which sense $u$ can be a \emph{continuous} solution of the balance law 
\[
\pt u + \px (f(u)) = \gfr
\] 
for a Borel, bounded function $\gfr$.
Depending on assumptions on the flux and on the function $\gfr$, $u$ can satisfy all the interpretations we consider of such equation, or not.

\subsection{Review of different interpretations}
Define \emph{characteristic curves} as $C^1$ integral curves $i_{\gamma}{{}:I\to\R^{2}}$ of the vector field $(1,f'(u))$, where {{}where $I$ is an interval}.
Define characteristics as $C^{1}$ solutions $\gamma{{}:I\to\R}$ of $\dot\gamma(t)=f'(u(t,\gamma(t)))$, where {{}where $I$ is an interval}.
\nomenclature{$(t,\gamma(t))$}{Characteristics of~\eqref{EE}: $\gamma\in C^1(\R;\R^2)$ solves $\dot\gamma(t)=f'(u(t,\gamma(t)))$}
In the following definition we collect a monotone family of such characteristics and we define a change of variables in $\R^2$.
\begin{definition}
\label{D:LagrangianParameterization}
We call \emph{(full) Lagrangian parameterization}, associated with the continuous function $u$, a continuous function $\chi:\R^{2}\to\R$ such that\footnote{If the first two conditions are required $\Ll^{1}$-a.e., then they hold naturally for all parameters.}
\begin{itemize}
\item[-] for each $y\in\R$, the function $t\mapsto\chi(t,y)=\chi_{y}^{\reu}(t)$ is a characteristic:
\[
\dot \chi_{y}^{\reu}(t)=\pt\chi(t,y)=f'(u(t,\chi(t,y)))=f'(u(i_{\chi(y)}(t)));
\]
\item[-] for each $t\in\R^{+}$, $y\mapsto\chi(t,y)=\chi^{\red}_{t}(y)$ is nondecreasing;
\item[-] denoting $i_{\chi}(t,y)\equiv i_{\chi(y)}(t)\equiv(t,\chi(t,y))$, then $i_{\chi}$ is onto $\R^{2}$.
\end{itemize}
\nomenclature{$i_{\chi}$}{$i_{\chi}(t,y)\equiv i_{\chi(y)}(t)\equiv(t,\chi(t,y))$, see Definition~\ref{D:LagrangianParameterization}}
\nomenclature{$\chi$}{Lagrangian parameterization for a continuous solution $u$ to~\eqref{EE}, see Definition~\ref{D:LagrangianParameterization}}
\nomenclature{$\D(\Omega)$}{{Distributions on the open set $\Omega$}}
\nomenclature{Lagrangian parameterization}{\hfill See Definition~\ref{D:LagrangianParameterization}}
A Lagrangian parameterization $\chi$ is \emph{absolutely continuous} if $(i_{\chi}^{-1})_{\sharp}\Ll^{2}\ll \Ll^{2}$.
Equivalently, $\chi^{-1}(S)$ must have positive $\Ll^{2}$-measure if $\Ll^2(S)>0$: $\chi$ maps negligible sets into negligible sets.
We say that a given Borel function $\gfr$ is a \emph{Lagrangian source} associated to $u$ and to the Lagrangian parameterizaion $\chi$ if
\begin{equation}
\forall y\in\R\qquad
\ddt u(t,\chi(t,y))=\gfr(t,\chi(t,y))
\qquad \text{in $\D'(\R^{+})$.}
\end{equation}
\end{definition}

Let $G>0$, $u:\R^{2}\to\R$ be continuous and $f:\R\to\R$ of class $C^{2}$. The following conditions are equivalent~\cite[Lemma 45--Corollary 46--Corollary 28]{file1ABC}:
\begin{enumerate}\label{itemize:sols}
\item \textbf{Eulerian solution}: The equation $\pt u(t,x) + \px (f(u(t,x))) =\gfr(t,x)$ holds in distributional sense for a Borel function $\gfr$ bounded by $G$.
\item \textbf{Kruzkov iso-entropy solution}: For every $\eta,q\in C^{1}(\R)$ satisfying $q'=\eta'f'$  \[\pt (\eta(u(t,x))) + \px (q(u(t,x))) =\eta'(u(t,x))\gfr(t,x)\] holds in distributional sense for a Borel function $\gfr$ bounded by $G$.
\item $u$ is Lipschitz continuous with constant $G$ along a family of characteristic curves whose image is dense in $\R^2$.
\item \textbf{Lagrangian solution}: $ t\mapsto u(t,\chi(t,y))$ is Lipschitz continuous with constant $G$ for a Lagrangian parameterization $\chi$ as in Definition~\ref{D:LagrangianParameterization} and for each $y$.
\end{enumerate}
If $\Ll^2(\clos(\infl(f)))=0$, the conditions above are also equivalent to \cite[\S~3]{file1ABC}:
\begin{enumerate}\setcounter{enumi}{3} 
\item \textbf{Broad solution}: $u$ is $G$-Lipschitz continuous along \emph{all} characteristic curves.
\end{enumerate}
\nomenclature{Eulerian solution}{\hfill See Page~\pageref{itemize:sols}}
\nomenclature{Broad solution}{\hfill See Page~\pageref{itemize:sols}}
\nomenclature{Lagrangian solution}{\hfill See Page~\pageref{itemize:sols}}
Summarizing, we established in~\cite{file1ABC} the equivalences
{

\centering\begin{tabular}{ccccccccc}
Broad 
&& \begin{tabular}{c} $\Longrightarrow$ always\\ \hline $\Longleftarrow$ if~\eqref{ass:h} holds \end{tabular}
&& Lagrangian
&&$\Longleftrightarrow$
&& Eulerian \\
\end{tabular}
\\ \noindent
}without discussing the identification of sources, and we proved entropy conservation.

\smallskip

We also established the following relations among sources in the different formulations, that we picture in Figure~\ref{fig:sources}, when inflections of the flux $f$ are negligible.

\begin{definition}
Lagrangian sources are explained in Definition~\ref{D:LagrangianParameterization}. Eulerian sources are Borel functions $g$ for which~\eqref{EE} holds in $\D'(\R^{2})$. Broad sources are Borel functions $g$ satisfying~\eqref{LE} \emph{for every} characteristic curve $i_{\gamma}$.
\end{definition}

\begin{theorem}[{\cite[Lemma 16-Theorem 37-Corollary 46]{file1ABC}}]
Let $\chi$ be any Lagrangian parameterization.
Then the family of sources associated to the Lagrangian parameterization $\chi$ contains the family of Broad sources.

If there exists an Eulerian source, then:
\begin{itemize}
\item The family of Lagrangian sources is non-empty.
\item If Assumption~\eqref{ass:h} holds, then the family of Broad sources is also nonempty.
\end{itemize}
\end{theorem}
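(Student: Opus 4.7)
The plan is to break the theorem into its three claims and handle them in turn, leveraging the four-way equivalence listed on page \pageref{itemize:sols}.

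For the first claim, the argument is essentially tautological. By Definition~\ref{D:LagrangianParameterization}, every curve $t\mapsto i_{\chi}(t,y)=(t,\chi(t,y))$ is a characteristic curve of $u$. Hence if $\gfr$ is a Broad source, then by definition $\ddt u(i_{\gamma}(t))=\gfr(i_{\gamma}(t))$ in $\D'$ for \emph{every} characteristic $i_{\gamma}$, and specialising to $i_{\gamma}=i_{\chi(y)}$ for each $y\in\R$ yields exactly the Lagrangian source identity for $\chi$. No further measurability needs to be checked because $\gfr$ is already Borel.

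For the first bullet of the second claim, I would argue as follows. Assume an Eulerian source exists. By the equivalence (i)$\Leftrightarrow$(iv), there is at least one Lagrangian parameterization $\chi$ such that, for every $y\in\R$, the map $t\mapsto u(t,\chi(t,y))$ is $G$-Lipschitz. For each such $y$, the distributional derivative $h(t,y):=\ddt u(t,\chi(t,y))$ is a bounded function of $t$; a standard Fubini-type argument combined with continuity of $\chi$ makes $h$ jointly Borel on $\R^{2}_{(t,y)}$. The remaining task is to transport $h$ to a Borel function $\gfr$ on the $(t,x)$-plane via $i_{\chi}$: one uses a Borel measurable selection of a right inverse of $i_{\chi}$, which exists because $\chi^{\red}_{t}$ is monotone (hence the preimage structure is very mild). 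The Lagrangian condition $\ddt u(t,\chi(t,y))=\gfr(t,\chi(t,y))$ is then automatic from the definition, the only subtle point being that when several values of $y$ produce the same $(t,x)$, the monotonicity of $\chi^{\red}_{t}$ forces $\chi(\cdot,y)$ to coincide on a time interval containing $t$, so the derivative $h(t,y)$ is the same for all such $y$ and $\gfr$ is well defined. This invokes exactly the machinery behind \cite[Lemma 16]{file1ABC}, on which I would rely.

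For the second bullet, assume further Assumption~\eqref{ass:h}. Then the equivalence is enriched by (v): $u$ is $G$-Lipschitz along \emph{every} characteristic curve. I take the Lagrangian source $\gfr$ constructed above and claim it is a Broad source. The main obstacle is compatibility: given an arbitrary characteristic $i_{\gamma}$, one must verify $(u\circ i_{\gamma})'=\gfr\circ i_{\gamma}$ in $\D'$, even if $i_{\gamma}$ is not a member of the chosen family $\{i_{\chi(y)}\}_{y\in\R}$. The strategy is to approximate $i_{\gamma}$ by characteristics from the Lagrangian parameterization, exploiting the fact that the image of the Lagrangian family is dense in $\R^{2}$ together with the uniform $G$-Lipschitz bound on $u$ along characteristics. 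More precisely, on an arbitrary compact time interval one can write $u(i_{\gamma}(t))-u(i_{\gamma}(s))$ as a limit of increments along nearby Lagrangian characteristics; since these increments are integrals of $\gfr$ against the pushforward, a dominated-convergence passage under Assumption~\eqref{ass:h} identifies the limit with $\int_{s}^{t}\gfr(i_{\gamma}(\tau))\,d\tau$. The delicate step here is the continuous dependence of characteristics on the parameter, which is precisely what fails without~\eqref{ass:h} (and which motivates the counterexamples of \S~\ref{Ss:Notlipanlongchar}); under~\eqref{ass:h}, the results of \cite[Theorem 37]{file1ABC} yield the required stability.
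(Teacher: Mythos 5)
Your first claim is handled exactly as the paper intends (it is the tautology recorded in the remark right after the theorem: each curve $t\mapsto(t,\chi(t,y))$ is itself a characteristic curve, so a Broad source restricts to a Lagrangian source for any $\chi$), and your second claim correctly reduces to the transport--plus--Borel-selection construction of \cite[Lemma 16]{file1ABC}. The only imprecision in that part is the assertion that monotonicity forces two curves agreeing at a time $t$ to coincide on a neighbourhood of $t$: ordered characteristics may touch at a single instant and separate, so well-definedness of $\gfr$ must instead be argued at $\Ll^1$-density points of the coincidence set, which is what the cited lemma does.

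The third claim is where the proposal breaks down. You take the Lagrangian source built from one parameterization $\chi$ and try to show it is Broad by approximating an arbitrary characteristic $i_{\gamma}$ with curves $i_{\chi(y_n)}$ and passing to the limit in $\int_{s}^{t}\gfr(i_{\chi(y_n)}(\tau))\,d\tau$ by dominated convergence. Dominated convergence requires pointwise convergence of the integrands, and uniform convergence $\chi(\cdot,y_n)\to\gamma$ gives no control on $\gfr\circ i_{\chi(y_n)}$ when $\gfr$ is merely Borel and bounded: the values of a Borel function along a limit curve are not determined by its values along approximating curves. Nor is this a fixable technicality. A Lagrangian source for $\chi$ is only determined up to sets that are $\Haus^{1}$-negligible along every curve of $\chi$, and such sets need not be negligible along other characteristics --- this is precisely the phenomenon exhibited in Example~\ref{Ex:Lagrnobroad} and in \S~\ref{S:Cantoparam} --- so ``every Lagrangian source is Broad under~\eqref{ass:h}'' is false; moreover even your specific $\gfr$ assigns at $(t,\gamma(t))$ the derivative of $u$ along the $\chi$-curve through that point, not along $\gamma$. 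What the theorem actually rests on is the construction of \cite[Definition 36, Theorem 37]{file1ABC}, recalled in~\eqref{E:broadSource}: under~\eqref{ass:h} one proves that, off a set which is $\Haus^{1}$-negligible along \emph{every} characteristic, $u$ is differentiable along characteristics with a derivative \emph{independent of which characteristic through the point is chosen} (outside $u^{-1}(\clos(\infl(f)))$), and one then defines $\gfr_{B}$ by a measurable selection on that set $E$. That pointwise independence statement is the key ingredient your argument is missing, and it cannot be obtained by approximation within a single monotone family.
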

We emphasize that any Broad source $\gfr$ is a good Lagrangian source independently of the choice of the Lagrangian parameterization, which always exists \cite[Lemma~17]{file1ABC}.

\smallskip

\subsection{Identification of sources}

We prove in \S~\ref{S:compatibilitysources} the following positive statement, see Theorem~\ref{T:nonemptyint} below, which includes the case of analytic functions. We emphasize that in the case of a continuous Eulerian source the continuous representative of the source is not necessarily, as one would expect, the right Broad source: we refer to Remark~\ref{Rem:cubico} for a counterexample with $f(u)=u^3$, due to non-convexity, and Theorem~\ref{T:continousSourceGen} for a weaker positive result.
A Lagrangian continuous source instead is automatically an Eulerian continuous source, actually even without Assumption~\eqref{ass:h}.

\begin{theorem}
If inflection points of $f$ are negligible as specified in~\eqref{ass:h}, then the family of Eulerian sources has nonempty intersection with the family of Broad sources.
\end{theorem}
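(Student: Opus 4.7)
The plan is to start from an Eulerian source $g$ and exhibit a single function that is both Eulerian and Broad. Under~\eqref{ass:h}, Theorem~37 of~\cite{file1ABC} already provides a bounded Borel Broad source $\widehat\gfr$. The proof therefore reduces to showing that $g$ and $\widehat\gfr$ coincide $\Ll^{2}$-almost everywhere, for then $\widehat\gfr$ is automatically an Eulerian source as well, and it is Broad by construction.

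To compare the two, the key tool is an \emph{absolutely continuous} Lagrangian parameterization $\chi$ in the sense of Definition~\ref{D:LagrangianParameterization}, produced via the companion paper. Along any Lagrangian parameterization, a Broad source is automatically a Lagrangian source, so $\widehat\gfr$ is a Lagrangian source associated with $\chi$; and the Eulerian source $g$ is a Lagrangian source associated with $\chi$ as well, since this is precisely the content of the Eulerian-to-Lagrangian implication of~\cite[Lemma~16]{file1ABC}. Hence, for every $y \in \R$, both $g(\,\cdot\,,\chi(\,\cdot\,,y))$ and $\widehat\gfr(\,\cdot\,,\chi(\,\cdot\,,y))$ coincide in $\D'(\R^{+})$ with the a.e.\ derivative of the $G$-Lipschitz function $t \mapsto u(t,\chi(t,y))$. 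Consequently they agree $\Ll^{1}$-a.e.\ in $t$ for each fixed $y$, and Fubini yields
\[
g \circ i_{\chi} \,=\, \widehat\gfr \circ i_{\chi} \qquad \Ll^{2}\text{-a.e.\ in the $(t,y)$ plane.}
\]

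The last step is to transfer this identity from $(t,y)$ to $(t,x)$ via the absolute continuity of $\chi$. If $E = \{g \neq \widehat\gfr\}$ had positive $\Ll^{2}$-measure in the $(t,x)$ plane, the equivalent formulation of absolute continuity stated in Definition~\ref{D:LagrangianParameterization} would force $\Ll^{2}(i_{\chi}^{-1}(E)) > 0$, directly contradicting the previous display. Therefore $\Ll^{2}(E) = 0$, so $\widehat\gfr$ coincides $\Ll^{2}$-a.e.\ with the Eulerian source $g$ and hence is itself an Eulerian source, proving the existence claim.

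The step I expect to be the main obstacle is securing an \emph{absolutely continuous} Lagrangian parameterization under the sole assumption~\eqref{ass:h}: among the many Lagrangian parameterizations guaranteed to exist, one must pick one for which the change of variables $i_{\chi}$ does not inflate $\Ll^{2}$-negligible sets in the target into $\Ll^{2}$-positive ones in the source. Once such a $\chi$ is in hand — which is exactly where assumption~\eqref{ass:h} enters, since without it the counterexamples of~\S\ref{S:Cantoparam} and~\S\ref{Ss:nondifferentiabilityset} show that Lagrangian sources may fail to be Eulerian sources — the remainder of the argument is a routine Fubini plus the existing identifications of~\cite{file1ABC}.
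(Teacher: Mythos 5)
Your proposal has a fatal gap at exactly the step you flagged as the ``main obstacle'': an absolutely continuous Lagrangian parameterization does \emph{not} exist in general under Assumption~\eqref{ass:h}. Section~\ref{S:Cantoparam} of this very paper constructs, for the quadratic flux $f(z)=z^{2}/2$ (which has no inflection points at all), a continuous solution and a compact set $K$ with $\Ll^{2}(K)>0$ that meets every characteristic curve in at most one point; consequently $\Ll^{2}(i_{\chi}^{-1}(K))=0$ for \emph{every} Lagrangian parameterization $\chi$, so none of them is absolutely continuous in the sense of Definition~\ref{D:LagrangianParameterization}. Assumption~\eqref{ass:h} does not buy you the change of variables you need, and your final transfer step from the $(t,y)$-plane to the $(t,x)$-plane collapses. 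A second, related error: you assert that the Eulerian source $g$ is itself a Lagrangian source for $\chi$ by \cite[Lemma~16]{file1ABC}; that lemma only guarantees that the family of Lagrangian sources is nonempty, not that $g$ (which is only determined up to $\Ll^{2}$-null sets) restricts correctly to the chosen characteristics. Indeed Figure~\ref{fig:sources} depicts the Eulerian and Lagrangian families as overlapping but not nested.

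More fundamentally, the conclusion you aim for --- $g=\widehat\gfr$ $\Ll^{2}$-a.e.\ --- is false in general, so no repair of the intermediate steps can save the strategy. The paper's proof is built precisely around this failure: it constructs a subset $D_{\gfr_E}$ of $\{\gfr_B\neq\gfr_E\}$ having the \emph{same, possibly positive,} $\Ll^{2}$-measure as $\{\gfr_B\neq\gfr_E\}$ but which is $\Haus^{1}$-negligible along every characteristic curve (Lemmas~\ref{L:setdifferent} and~\ref{L:vario}, Corollary~\ref{C:varioC}, the latter via an ad hoc Lagrangian parameterization through any prescribed characteristic). The compatible source $\gfr_U$ of Theorem~\ref{T:nonemptyint} is then obtained by overwriting $\gfr_B$ with $\gfr_E$ on $D_{\gfr_E}$ and on $\R^{2}\setminus E$: this does not disturb the Broad property because the modified set is invisible to characteristics, and it produces an Eulerian source because the remaining discrepancy is $\Ll^{2}$-null (the identification there uses the sign of $f''$ and Dafermos-type integral estimates, not a Fubini argument through a global change of variables). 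You need this two-sided ``negligible along characteristics versus negligible in the plane'' dichotomy, which is exactly what your proposal tries to avoid.
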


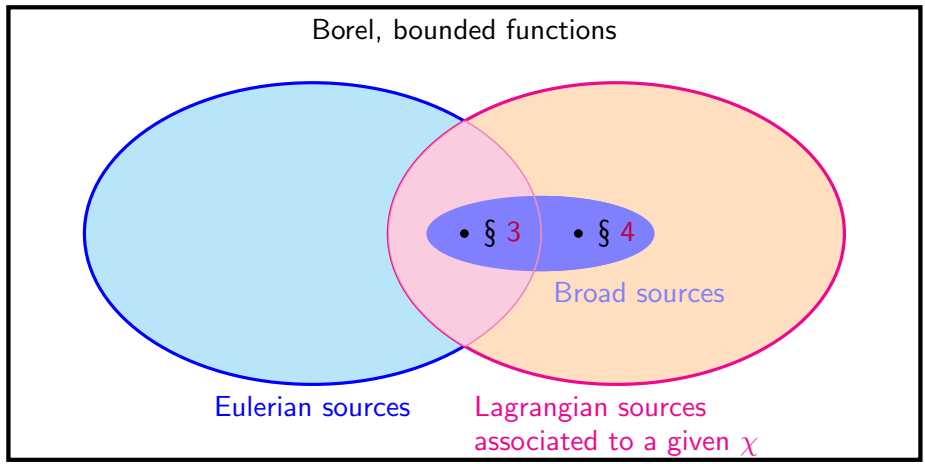
\begin{figure}[ht!]
\begin{tikzpicture}
\draw[ultra thick, ->] (0,0) rectangle (12,6) 
(6,6) node[align=left, below] {Borel, bounded functions};
\filldraw[color=blue, fill=cyan!25, very thick](4,3) circle (3 and 2) (4,1) node[align=left, below] {Eulerian sources};
\filldraw[color=magenta!99, fill=orange!25, very thick](8,3) circle (3 and 2) (8,1) node[align=left, below] {Lagrangian sources\\ associated to a given $\chi$};
\fill[blue!50] (7,3) ellipse (1.5 and 0.5) (8.3,2.5) node[align=left, below] {Broad sources};
\fill[blue!50] (7,3) ellipse (1.5 and 0.5);
\begin{scope}
  \clip (8,3) circle (3 and 2);
  \filldraw[color=magenta!60, fill=magenta!25, very thick](4,3) circle (3 and 2);
\end{scope}
\begin{scope}
  \clip (4,3) circle (3 and 2);
  \filldraw[color=magenta!60, fill=magenta!25](8,3) circle (3 and 2);
\end{scope}
\begin{scope}
  \clip (4,3) circle (3 and 2);
  \fill[blue!50] (7,3) ellipse (1.5 and 0.5);
\end{scope}
\filldraw (6,3) circle (.05) (6.5,3) node[align=right] {\S~\ref{S:compatibilitysources}};
\filldraw (7.5,3) circle (.05) (8,3) node[align=right] {\S~\ref{S:Cantoparam}};
\end{tikzpicture}
\caption{We picture relations among the sources that we determine for a fixed continuous solution of the balance law~\eqref{EE} under the non-degeneracy Assumption~\eqref{ass:h}.
{{}When a Lagrangian source is continuous, it is also an Eulerian source, although, surprisingly, it might not be the Lagrangian source for a different Lagrangian parameterization, see Remark~\ref{Rem:cubico} and Theorem~\ref{T:continousSourceGen}. If a broad source is continuous, it is a right source with all the formulations}.}
\label{fig:sources}
\end{figure}

\subsection{Counterexamples}
Counterexamples are the most surprising part of this work, pointing out differences among different formulations. Of course the family of Eulerian sources is not generally contained in the family of Lagrangian sources, since changing values on a curve affects both the Lagrangian and the Broad formulation but not the Eulerian one. Other relations in Figure~\ref{fig:sources} are less trivial.

\vspace{.3\baselineskip}
\texttt{Section~\ref{S:Cantoparam}}. The first counterexample concerns Lagrangian parameterizations, but not only.
We already mentioned that one can build up a monotone family of characteristics in order to define a `monotone' change of variables $i_\chi$ from $\R^2$ to $\R^2$.
We know from~\cite[Example A.2]{BCSC} that in general one cannot choose a Lagrangian parameterization $\chi$ which is Locally Lipschitz continuous. In \S~\ref{S:Cantoparam} we construct an example---for the quadratic flux!!!---where the measure $\partial_y \chi$ has a Cantor part.
In particular, the Lagrangian parameterization is not absolutely continuous, according to Definition~\ref{D:LagrangianParameterization}. Even for the quadratic flux thus, surprisingly, there exists a subset $K$ of the plane of positive $\Ll^2$-measure which intersects each characteristic curve in a single point.
For this reason, even for the quadratic flux, the family of Broad sources is not contained in the family of Eulerian sources: the Lagrangian source can be defined arbitrarily on the $\Ll^2$-positive measure set $K$, but of course not all such definitions provide an Eulerian source.

\vspace{.3\baselineskip}
\texttt{Section~\ref{Ss:nondifferentiabilityset}}.
For uniformly convex fluxes, there is a natural element lying both in the family of Eulerian sources and in the family of Lagrangian sources (\cite[Corollary~6.7]{BCSC}, \cite[Theorem~1.2]{estratto}, \cite{Pinamonti}): the pointwise derivative of $u$ along any characteristic curve through the point, when possible, and vanishing in the remaining $\Ll^2$-negligible set.
We construct in this paper a second example which shows that no such natural element exists in general even for convex fluxes, not uniformly convex: there can be a positive $\Ll^2$-measure set $K\subset \R^2$ of points where $u$ is not differentiable along characteristic curves.

\vspace{.3\baselineskip}
\texttt{Section~\ref{Ss:Notlipanlongchar}}.
In the third place, we show that, when Assumption~\eqref{ass:h} fails, a continuous function $u$ which is both an Eulerian and a Lagrangian solution is not necessarily a Broad solution: the family of Broad sources might be empty, if inflections of $f$ are not negligible. Namely, Example~\ref{Ex:Lagrnobroad} provides a continuous solution  $u(t,x)\equiv u(x)$ to a balance law which is \emph{not} Lipschitz continuous on many of its characteristics.
It also proves that, whenever one chooses a `good' Lagrangian parameterization, the Lagrangian source should be fixed accordingly: no universal choice is possible even within all admissible Lagrangian parameterizations. 

\vspace{.3\baselineskip}
\texttt{Section~\ref{S:cotinuoussources}}.
{{}
With continuous sources, the set of broad solutions might be always non-empty. 
Even if this was the case, Example~\ref{Ex:cubico} with $f(u)={u^{2}}$, $u(t,x)=\sqrt[3]x$ and $g(t,x)=1$ shows that the Broad source, that in this case is available, is not the continuous representative of the continuous, constant Eulerian source.
Nevertheless when a Lagrangian source is continuous then it is also the correct Eulerian source.
}

\section{Compatibility of Broad and Eulerian sources when inflections are negligible}
\label{S:compatibilitysources}

When the flux has negligible inflection points, the source terms in the Broad, Lagrangian and Eulerian interpretations of~\eqref{EE} are compatible.

\begin{theorem}
Let $u$ be continuous. Suppose the distribution $ \pt u(t,x) + \px (f(u(t,x))) $ is represented by a bounded function. 
If $\Ll^{1}(\clos({\infl(f)}))=0$, then the family of Broad sources and the family of Eulerian sources have nonempty intersection.
\end{theorem}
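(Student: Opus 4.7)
Plan. The goal is to show that the Broad source $\hat g$ provided by Theorem~37 of~\cite{file1ABC} (whose existence is guaranteed precisely under~\eqref{ass:h}) is $\mathcal L^2$-a.e.\ equal to the Eulerian source $g$. Once this identification is established, $\hat g$ itself lies in the $\mathcal L^2$-a.e.\ equivalence class of Eulerian sources, and by construction it also satisfies the Broad relation along every characteristic, so it belongs to the desired intersection. Thus the heart of the argument is to verify the distributional identity $\partial_t u + \partial_x(f(u)) = \hat g$ in $\mathcal D'(\mathbb R^2)$.

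To carry out the identification I would fix any Lagrangian parameterization $\chi$ (Lemma~17 of~\cite{file1ABC}) and set $v(t,y) := u(t,\chi(t,y))$. By the Broad property, $v(\cdot,y)$ is Lipschitz for every $y$ and $\partial_t v(t,y) = \hat g(t,\chi(t,y))$ for $\mathcal L^1$-a.e.\ $t$. Testing the would-be identity against an arbitrary $\varphi \in C^\infty_c(\mathbb R^2)$, I would pull the integral back through the change of variables $x = \chi(t,y)$ for each fixed~$t$. Using $\partial_t \tilde\varphi = (\partial_t\varphi + f'(v)\partial_x\varphi)\circ i_\chi$ with $\tilde\varphi := \varphi\circ i_\chi$, the algebraic identity $v(\partial_t\varphi + f'(v)\partial_x\varphi) - (v f'(v) - f(v))\partial_x\varphi = v\partial_t\varphi + f(u)\partial_x\varphi$ (exploiting that $f(u) = f(v)$), and an integration by parts in~$t$ on each fibre $\{y = \text{const}\}$ against $\partial_t v = \hat g\circ i_\chi$, the Eulerian weak formulation is reduced to an identity entirely in the $(t,y)$-plane. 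Pushing the identity forward should yield $\iint (g-\hat g)\varphi\, dt\, dx = 0$ for every $\varphi$, hence $g = \hat g$ $\mathcal L^2$-a.e.

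The main obstacle, highlighted in \S~\ref{S:Cantoparam}, is that $\chi$ may fail to be absolutely continuous: $\partial_y\chi(t,\cdot)$ can carry a Cantor part, and the map $y \mapsto \chi(t,y)$ may possess plateaus, so the change of variables is not a diffeomorphism. I would treat this by decomposing $\partial_y\chi(t,\cdot)$ into its absolutely continuous and singular (Cantor) components for $\mathcal L^1$-a.e.\ $t$, noting that no jump part is present since $\chi(t,\cdot)$ is continuous. On plateaus, $v(t,\cdot)$, $f(v(t,\cdot))$ and $\hat g(t,\chi(t,\cdot))$ are all constant in~$y$, so their contributions to the boundary terms in the integration by parts cancel; the support of the Cantor part is mapped onto an $\mathcal L^1$-negligible subset of $\mathbb R_x$ for each such $t$, hence integrates against the smooth $\varphi$ and the Eulerian integrand to zero. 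Assumption~\eqref{ass:h} enters in an essential way here: it is precisely what upgrades $u$ to a Broad solution and makes $\partial_t v = \hat g\circ i_\chi$ available along every characteristic through every point, so that the Fubini-type decomposition of the pull-back identity is rigorous and yields the $\mathcal L^2$-a.e.\ coincidence $g = \hat g$.
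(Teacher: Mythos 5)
There is a genuine gap, and it is located exactly at the point your plan identifies as "the main obstacle". Your strategy is to prove that the Broad source $\gfr_B$ of \eqref{E:broadSource} coincides $\Ll^{2}$-a.e.\ with the Eulerian source $\gfr_E$, and to dismiss the Cantor part of $\partial_y\chi(t,\cdot)$ on the grounds that "the support of the Cantor part is mapped onto an $\Ll^{1}$-negligible subset of $\R_x$". This is backwards: the Cantor part of the monotone map $y\mapsto\chi(t,y)$ is concentrated on an $\Ll^{1}$-null set of parameters $y$, but its \emph{image} in $x$ carries the full mass of that singular part, hence has positive $\Ll^{1}$-measure. This is precisely the phenomenon exhibited in \S~\ref{S:Cantoparam} and \S~\ref{Ss:nondifferentiabilityset}: there is a compact set $K$ with $\Ll^{2}(K)>0$ which every characteristic curve meets in an $\Haus^{1}$-negligible set. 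On such a set the Broad relation~\eqref{LE} imposes no constraint whatsoever on the source (any bounded Borel modification of a Broad source on $K$ is still a Broad source), whereas the Eulerian equation pins $\gfr_E$ down $\Ll^{2}$-a.e.\ on $K$. Consequently $\gfr_B=\gfr_E$ $\Ll^{2}$-a.e.\ is simply false in general, and your pull-back computation cannot produce it: the fibres $\{y=\mathrm{const}\}$ see only a null set of times inside $i_\chi^{-1}(K)$, so the integration by parts along fibres never tests $\hat g$ on the positive-measure set where the two sources may disagree.

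The argument therefore has to run in the opposite direction. One must \emph{modify} $\gfr_B$: identify the discrepancy set $D_{\gfr_E}\subset E\cap\{\gfr_E\neq\gfr_B\}$ of \eqref{finalset} (of possibly \emph{positive} $\Ll^{2}$-measure), prove that it is negligible along every Lagrangian parameterization and hence along every characteristic curve (Lemma~\ref{L:setdifferent}, Corollary~\ref{C:varioC}, Lemma~\ref{L:vario}), and then redefine the source to equal $\gfr_E$ on $D_{\gfr_E}$ and on $\R^2\setminus E$ while keeping $\gfr_B$ elsewhere (Theorem~\ref{T:nonemptyint}). The substantive analytic input — absent from your proposal — is the proof that at a point of $E$ which is simultaneously a Lebesgue point of the $t$-sections of $\gfr_E$, a density point of the parameterized fibre, and a point of differentiability of $u$ along the selected characteristic, the one-sided Dafermos integral inequality \eqref{E:ruibewgde} (valid on regions where $f''$ has a sign, which is where Assumption~\eqref{ass:h} enters) forces $\gfr_B=\gfr_E$; this is what shows $D_{\gfr_E}$ is invisible to characteristics, not that it is $\Ll^{2}$-null. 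Without this two-step structure (coincidence where both formulations "see" the point, patching where only one does), the conclusion cannot be reached.
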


We prove such positive statement in this section by constructing a Borel function which is both an Eulerian source and a Broad source: see Theorem~\ref{T:nonemptyint}.

In~\cite[Definition 36, Theorem 37]{file1ABC} we constructed the Broad source
\begin{equation}\label{E:broadSource}
\gfr_{B}(t,x)
\doteq 
\begin{cases}
\gfr (t,x)
&(t,x)\in E\setminus u^{-1}(\clos({\infl(f)}))\\
0
&\text{otherwishe}
\end{cases}
\end{equation}
where $E$ is the Borel set \cite[equation (3.4) and Lemma 38]{file1ABC} of points $(t,x)$ through which there exists a $C^1$ (time-translated) characteristic $\gamma $, where $\gamma (0)=(t,x)$, for which $s\mapsto u(t +s, \gamma (s))$ is differentiable at $s=0$; the function
$(t, x)  \mapsto \gfr(t, x) =\dds u(t + s, \gamma(s)) \Big|_{s=0}
$ was defined by a selection theorem on such set $E$.

In order to have that the source is also an Eulerian source, of course we have to modify $\gfr_{B}$ outside $E{{}\cup u^{-1}(\clos({\infl(f)}))}$ setting it equal to any Eulerian source $\gfr_E$ instead of fixing a `random' value {{}and we need to prove that it is the correct Eulerian source on  $E\cup u^{-1}(\clos({\infl(f)}))$}. Changing the value outside $E$ does not affect the fact of being a Broad source term, because $u$ is Lipschitz continuous along any characteristic curve \cite[Theorem 30]{file1ABC} and therefore the complement of $E$ is $\Haus^1$-negligible along any characteristic curve.
Nevertheless, we could also need to turn $\gfr_{B}$ into $\gfr_E$ within $E$: we indeed construct a subset $ {} D_{\gfr_E}$ of $\{\gfr_B\neq\gfr_E\}$ of its same (maybe positive!) $\Ll^2$-measure and which is negligible along any integral curve of $(1,f'(u))$.
Once we turn $\gfr_{B}$ into $\gfr_E$ also on ${}  D_{\gfr_E}$ we are done and we get a Borel function which is both a Broad and an Eulerian source term.

We first prove that $0$ is a good value both for Eulerian and Broad sources on $u^{-1}(\clos({\infl(f)}))$, thanks to the assumption of negligibility of inflection points.

\begin{lemma}
\label{L:setNull}
Consider any closed set $N\subset\R$ which is $\Ll^1$-negligible. Then
\begin{enumerate}
\item\label{item:posiisiss} any Eulerian source $\gfr_E$ vanishes at $\Ll^2$-Lebesgue points of $u^{-1}(N)\subseteq\R^2$.
\item\label{item:possssiisiss} the $t$-derivative of $u\circ i_{\gamma }(t)=u(t,\gamma(t))$ vanishes at $\Haus^1$-Lebesgue points of $(u\circ i_{\gamma })^{-1}(N)\subseteq\R$, for any any $C^1$ integral curve $i_{\gamma}$ of $(1,f'(u))$
\end{enumerate}
\end{lemma}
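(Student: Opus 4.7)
The plan is to prove~(i) and~(ii) by independent arguments that both exploit that $N$ is closed with $\Ll^{1}(N)=0$. Item~(ii) is a purely one-dimensional statement, and~(i) can be reduced to the Kruzkov iso-entropy identity, tested against a carefully chosen entropy concentrated near $N$.

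For~(ii), set $h:=u\circ i_{\gamma}$, which is Lipschitz by~\cite[Theorem 30]{file1ABC}, and let $A:=h^{-1}(N)$, which is closed. The one-dimensional area formula gives
\[
\int_{A}|h'(\tau)|\,d\tau
\;=\;\int_{\R}\#\{\tau\in A\,:\,h(\tau)=y\}\,dy
\;=\;0,
\]
because $h(A)\subseteq N$ and $\Ll^{1}(N)=0$; hence $h'=0$ $\Ll^{1}$-almost everywhere on $A$. At a density-one point $t_{0}$ of $A$, splitting the fundamental theorem of calculus between $A$ and its complement gives
\[
\bigl|h(t_{0}+s)-h(t_{0})\bigr|
\;=\;\Bigl|\int_{[t_{0},t_{0}+s]\setminus A}h'(\tau)\,d\tau\Bigr|
\;\le\;\|h'\|_{\infty}\,\Ll^{1}\bigl([t_{0},t_{0}+s]\setminus A\bigr)
\;=\;o(|s|),
\]
so $h$ is differentiable at $t_{0}$ with $h'(t_{0})=0$, which is the desired conclusion.

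For~(i), I use the Kruzkov iso-entropy reformulation of Eulerian solutions, namely item~(2) of the equivalence list above: for every $C^{1}$ pair $(\eta,q)$ with $q'=\eta'f'$ one has $\pt\eta(u)+\px q(u)=\eta'(u)\,\gfr_{E}$ in $\D'(\R^{2})$. Apply this with
\[
\eta_{\epsilon}(z)\,:=\,\int_{0}^{z}\max\!\Bigl(0,\,1-\tfrac{\operatorname{dist}(s,N)}{\epsilon}\Bigr)ds,
\qquad
q_{\epsilon}(z)\,:=\,\int_{0}^{z}\eta_{\epsilon}'(s)\,f'(s)\,ds,
\]
both of class $C^{1}$ since $\operatorname{dist}(\cdot,N)$ is continuous. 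Because $N$ is closed and $\Ll^{1}$-null, the open neighborhoods $N_{\epsilon}:=\{\operatorname{dist}(\cdot,N)<\epsilon\}$ satisfy $\Ll^{1}(N_{\epsilon})\to 0$; together with the pointwise bound $0\le\eta_{\epsilon}'\le\mathbf{1}_{N_{\epsilon}}$ this yields that $\eta_{\epsilon}(u)$ and $q_{\epsilon}(u)$ converge to $0$ uniformly on compact subsets of $\R^{2}$ as $\epsilon\to 0$. The left-hand side of the entropy identity therefore converges to $0$ in $\D'(\R^{2})$, whereas $\eta_{\epsilon}'(u)\gfr_{E}\to\mathbf{1}_{u^{-1}(N)}\gfr_{E}$ in $L^{1}_{\mathrm{loc}}$ by dominated convergence. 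Hence $\gfr_{E}=0$ $\Ll^{2}$-a.e.\ on $u^{-1}(N)$; the averages of $|\gfr_{E}|$ on balls centered at a density-one point $(t_{0},x_{0})$ of $u^{-1}(N)$ are then controlled by $\|\gfr_{E}\|_{\infty}\,\Ll^{2}(B_{r}\setminus u^{-1}(N))/\Ll^{2}(B_{r})\to 0$, which is exactly the vanishing of the approximate limit claimed.

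The main technical point is the construction of the entropy $\eta_{\epsilon}$: it must be $C^{1}$, its derivative must be supported in a shrinking neighborhood of $N$ (so that $\eta_{\epsilon}(u)$ and $q_{\epsilon}(u)$ vanish in the limit), yet $\eta_{\epsilon}'$ must converge pointwise to $\mathbf{1}_{N}$ in order that the right-hand side of the entropy identity isolate the restriction of $\gfr_{E}$ to $u^{-1}(N)$. All three requirements follow directly from $N$ being closed and $\Ll^{1}$-negligible, which are precisely the assumptions of the lemma.
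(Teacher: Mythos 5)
Your proof is correct and follows essentially the same route as the paper: part (i) is obtained by testing the Kruzkov iso-entropy identity with $C^1$ entropies whose derivatives approximate $\mathds{1}_N$ and are supported in shrinking neighborhoods of $N$ (you use the tent function $\max(0,1-\operatorname{dist}(\cdot,N)/\epsilon)$ where the paper mollifies indicators of open supersets, a cosmetic difference), and part (ii) is the standard Lipschitz computation that the paper delegates to \cite[Lemma~41]{file1ABC}, which you correctly carry out via the one-dimensional area formula and the density-point splitting of the fundamental theorem of calculus.
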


\begin{proof}
By $\sigma$-additivity, we directly assume that $N$ is compact, not only closed.\\
\eqref{item:posiisiss}
We apply the entropy equality~\cite[Lemma~42]{file1ABC}.
Choose in particular the convex entropies
\[
\eta'_{\varepsilon}(z)=(\uno_{O_{\varepsilon}}*\rho_{\varepsilon})(z),
\qquad \eta_{\varepsilon}(-\infty)=0,
\]
where $\rho_{\varepsilon}$ is a smooth convolution kernel concentrated on $[-\varepsilon,\varepsilon]$ and $O_{\varepsilon}\supset N$ is a sequence of open sets such that $\Ll^{1}(O_{\varepsilon})<\varepsilon$.
Since $\Ll^{1}(N)=0$ and $N$ is compact, then $\eta_{\varepsilon}'(u)$ converges pointwise to $\mathds{1}_{u^{-1}(N)}$ so that $\eta_{\varepsilon}(z)$ converges locally uniformly to $0$.
In particular, the distributions $\pt\eta_{\varepsilon}(u)$ and $\px (q_{\varepsilon}(u)) $ must vanish in the limit as $\varepsilon\downarrow0$.
From the entropy equality
\[
\pt\eta_{\varepsilon}(u) + \px (q_{\varepsilon}(u)) 
=
\eta_{\varepsilon}'(u)\gfr_{E} 
\qquad\text{in $\D'(\R^2)$}
\]
we deduce the claim in the limit $\varepsilon\downarrow0$: $\gfr_{E}$ must vanish $\Ll^{2}$-a.e.~on $u^{-1}(N)$ because
\[
0=\mathds{1}_{u^{-1}(N)}(t,x)\gfr_E(t,x)
\qquad\text{in $\D'(\R^2)$.}
\]

\eqref{item:possssiisiss} Recall that $u$ is $G$-Lipschitz continuous along characteristic curves \cite[Theorem~30]{file1ABC}, thus in particular $u\circ i_{\gamma}$ is $G$-Lipschitz. As $N$ is $\Ll^{1}$-negligible by hypothesis then the derivative of $u\circ i_{\gamma}(t)$ must vanish at Lebesgue points of $(u\circ i_{\gamma})^{-1}(N)$ by an easy computation, see~\cite[Lemma~41]{file1ABC}.
\end{proof}

{{}We now construct in Lemma~\ref{L:vario} a subset $  D_{\gfr_E}$ of $E$, defined in~\eqref{finalset}, where we need to turn $\gfr_B$ into $\gfr_E$: in Lemma~\ref{L:setdifferent} we remove from $E$ 
\begin{itemize}
\item the set where $\gfr_E=\gfr_B$, 
\item the set $u^{-1}(\clos({\infl(f)}))$ and 
\item the points which are not Lebesgue points of the time-restrictions of $\gfr_E$.
\end{itemize}
We prove that the remaining set $D_{\gfr_E} $ is negligible along any Lagrangian parameterization.
Corollary~\ref{C:varioC} then proves that $D_{\gfr_E}$ is negligible not only along any Lagrangian parameterization, but also along any integral curve of $(1,f'(u))$.
Once constructed $   D_{\gfr_E}$, the compatible source in Theorem~\ref{T:nonemptyint} will be straightforward.
}

\begin{lemma}
\label{L:setdifferent}
Let $S^\complement$ denote the complement of a set $S$. Let $\gfr_{B}$ be the Broad source term in~\eqref{E:broadSource} and consider any Eulerian source term $\gfr_E$. Define 
\begin{align}\label{finalset}
D_{\gfr_E}\doteq E&\bigcap \{\gfr_E=\gfr_B\}^{\complement}  
	\bigcap\left( u^{-1}\left(\clos({\infl(f)})\right)\right)^\complement\\
	&	\bigcap\left\{\exists\lim_{h\to0}\frac{1}{h}\int_{\bar x}^{\bar x+h} \gfr_{E}(\bar t,x)dx= \gfr_E(\bar t,\bar x)\right\} \ .\notag
\end{align}
Then $\Ll^2(\Psi^{-1}(D_{\gfr_E}))=0$ for any Lagrangian parameterization $\chi$, where \[\Psi(t,y)=(t,\chi(t,y))\,.\]
\end{lemma}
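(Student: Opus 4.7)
The plan is to establish that, along each characteristic of $\chi$, the Eulerian source $\gfr_E$ (in its Lebesgue-point representative) agrees with the Broad source $\gfr_B$ for $\Ll^1$-a.e. time, and then to conclude the two-dimensional statement via Fubini. Since $\gfr_B$ is a Lagrangian source associated to $\chi$ by the theorem recalled just before Definition of Eulerian sources, one has
\[
u(t, \chi(t, y)) - u(\bar t, \chi(\bar t, y)) = \int_{\bar t}^{t} \gfr_B(s, \chi(s, y))\, ds
\qquad \forall\, y \in \R,\ \forall\, \bar t < t.
\]
I plan to prove the analogous identity with $\gfr_E$ in place of $\gfr_B$, in the sense that the Lebesgue representative of $\gfr_E$ serves as a Lagrangian source for $\chi$ at Lebesgue points of the $x$-slices. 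Subtracting then gives $\gfr_E(s, \chi(s, y)) = \gfr_B(s, \chi(s, y))$ for $\Ll^1$-a.e.\ $s$, for every $y$; Fubini in $(s, y)$ then yields $\Ll^2(\Psi^{-1}(D_{\gfr_E})) = 0$ since $D_{\gfr_E}$ is contained in $\{\gfr_E \ne \gfr_B\}$ intersected with the set of $x$-Lebesgue points of $\gfr_E$.

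The integrated $\gfr_E$-identity is obtained by testing the distributional equation $\pt u + \px f(u) = \gfr_E$ against a smooth approximation of the indicator of the thin tube $\{(s, x): \bar t \le s \le t,\ \chi(s, y) \le x \le \chi(s, y+\epsilon)\}$ bounded by two nearby characteristics of $\chi$. The divergence-theorem boundary, together with the identity $\dot\chi(s, y') = f'(u(s, \chi(s, y')))$, produces
\begin{multline*}
\int_{\bar t}^t \int_{\chi(s,y)}^{\chi(s,y+\epsilon)} \gfr_E(s,x)\, dx\, ds = \int_{\chi(t,y)}^{\chi(t,y+\epsilon)} u(t,x)\, dx - \int_{\chi(\bar t,y)}^{\chi(\bar t,y+\epsilon)} u(\bar t,x)\, dx \\ + \int_{\bar t}^t \bigl[G(u(s,\chi(s,y+\epsilon))) - G(u(s,\chi(s,y)))\bigr]\, ds,
\end{multline*}
with $G(z) := f(z) - z f'(z)$. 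Dividing by the initial width $\chi(\bar t, y+\epsilon) - \chi(\bar t, y)$ and passing to the limit $\epsilon \downarrow 0$ — using the Lebesgue-point condition to handle the left-hand side, the continuity of $u$ and $G$ to show the last term is negligible relative to the width, and the Broad-source integral formula for the $u$-difference — I extract the integrated identity along the characteristic $\chi(\cdot, y)$.

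The main obstacle is the passage to the limit: as Section~\ref{S:Cantoparam} shows, $y' \mapsto \chi(s, y')$ can have a Cantor component, so the width $\chi(s, y+\epsilon) - \chi(s, y)$ need not scale linearly with $\epsilon$. The rescaling must therefore be done by the width itself, and the $x$-Lebesgue-point condition built into $D_{\gfr_E}$ is exactly the hypothesis needed to guarantee that $\tfrac{1}{\chi(s,y+\epsilon)-\chi(s,y)}\int_{\chi(s,y)}^{\chi(s,y+\epsilon)} \gfr_E(s,x)\, dx$ converges to $\gfr_E(s, \chi(s,y))$. The exclusion of $u^{-1}(\clos(\infl(f)))$ from $D_{\gfr_E}$, together with Lemma~\ref{L:setNull}, controls points where $u$ attains inflection values, where the analysis of the $G$-term could otherwise degenerate, so that on all of $D_{\gfr_E}$ the limiting identity is meaningful and produces the required contradiction with $\gfr_E \ne \gfr_B$.
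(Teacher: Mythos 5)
Your overall reduction (show that $\gfr_E\circ\Psi=\gfr_B\circ\Psi$ for $\Ll^1$-a.e.\ time along each curve $\chi(\cdot,y)$, then conclude by Fubini) is the same as the paper's, but the way you try to extract the pointwise identity has a genuine gap. You integrate the PDE over the tube bounded by the two \emph{characteristics} $\chi(\cdot,y)$ and $\chi(\cdot,y+\epsilon)$ and then divide by the width at the single time $\bar t$. The width $w_\epsilon(s)=\chi(s,y+\epsilon)-\chi(s,y)$ depends on $s$, and — as you yourself observe, and as \S~\ref{S:Cantoparam} shows — $\partial_y\chi$ can have a Cantor part; two distinct characteristics can even touch at some times and be separated at others. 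Hence after dividing by $w_\epsilon(\bar t)$ the source term becomes $\int_{\bar t}^t \frac{w_\epsilon(s)}{w_\epsilon(\bar t)}\bigl(\frac{1}{w_\epsilon(s)}\int_{\chi(s,y)}^{\chi(s,y+\epsilon)}\gfr_E\,dx\bigr)ds$ and the top boundary term carries a factor $w_\epsilon(t)/w_\epsilon(\bar t)$: these ratios have no limit as $\epsilon\downarrow0$ (and $w_\epsilon(\bar t)$ may even vanish for small $\epsilon$), so the Lebesgue-point hypothesis built into $D_{\gfr_E}$ does not rescue the passage to the limit. The lateral term is also not negligible: since $G'(z)=-zf''(z)$, the integrand $G(u(s,\chi(s,y+\epsilon)))-G(u(s,\chi(s,y)))$ is of the order of the oscillation of $u$ across the tube, which is controlled only by the modulus of continuity of $u$ — and $u$ is merely continuous (not Lipschitz, not even H\"older in general, cf.\ Remark~\ref{R:noHolder}) — so it need not be $o(w_\epsilon(\bar t))$ uniformly in $s$.

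The paper circumvents exactly these two obstacles by taking a tube of \emph{constant} width: the region between $\gamma(t)=\chi(t,\bar y)$ and its vertical translate $\gamma(t)+\varepsilon$. The price is that the right lateral boundary is no longer a characteristic, so one does not get an identity; instead, working in $\ri\{f''(u)\ge0\}$ (which is why $u^{-1}(\clos(\infl(f)))$ is removed from $D_{\gfr_E}$), Dafermos' computation gives the one-sided inequality~\eqref{E:ruibewgde}, whose double limit $\varepsilon\downarrow0$, $\tau\downarrow\bar t$ yields $\gfr_B(\bar t,\bar x)\le\gfr_E(\bar t,\bar x)$; the symmetric tube $[\gamma(t)-\varepsilon,\gamma(t)]$ gives the reverse inequality, whence $\gfr_B=\gfr_E$ at a.e.\ point of $\Psi^{-1}(D_{\gfr_E})$, contradicting $D_{\gfr_E}\subset\{\gfr_E\ne\gfr_B\}$. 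Note that your argument never actually uses the sign of $f''$, whereas it is the essential ingredient here; the vague appeal to Lemma~\ref{L:setNull} to ``control the $G$-term'' does not substitute for it. To repair your proof you would have to replace the characteristic tube by the constant-width one and accept an inequality in place of an identity.
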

\begin{proof}
Notice that we are not considering the intersection of the accumulation points of $\{f''>0\}$ and $\{f''<0\}$ since such intersection is precisely $\clos({\infl(f)})$, which lies in the complement of $D_{\gfr_E}$.
We can thus focus in an open set where $\{f''\geq 0\}$: open sets where $\{f''\leq 0\}$ are analogous.

{{}Denote by $\ri(X)$ the relative interior of a set $X$.}
Fix any Lagrangian parameterization $\chi$ and set $\Psi(t,y)=(t,\chi(t,y))$: we prove that \[\Ll^2\left(\Psi^{-1}\left(D_{\gfr_E} \right)\right)=0 .\]

Let $(\bar t, \bar x)\in D_{\gfr_E}\cap \ri\{f''(u)\geq 0\}  $.
Suppose $(\bar t, \bar x)=(\bar t, \chi(\bar t,\bar y))=\Psi(\bar t,\bar y)$ with
\begin{align}
\label{E:sionwgnuigwn2222}
&\exists \lim_{\varepsilon\to 0}\frac{1}{\varepsilon}\int_{\bar t}^{\bar t+\varepsilon}\Gfr_{E}(  t,\bar y)dt=\gfr_{E}(\bar t, \bar x)
\qquad\text{where }\Gfr_{E}(  t,  y)= \gfr_{E}( t,\chi(  t,  y))
\\
\label{E:sionwgnuigwn}
&\exists \lim_{\varepsilon\downarrow 0}\frac{\Ll^1\left(\Psi^{-1}(D_{\gfr_E})\cap[\bar t-\varepsilon,\bar t+\varepsilon]\times\{\bar y\}\right)}{2\varepsilon}=1
\\
\label{E:sionwgnuigwn111}
&\exists \lim_{\varepsilon\to 0}\frac{u(\Psi(\bar t+\varepsilon,\bar y))-u(\Psi(\bar t,\bar y))}{\varepsilon}=\gfr_{B}(\bar t,\bar x)
\end{align}
Since this is satisfied at $\Ll^2$-a.e.~$(t,y)$ in $\Psi^{-1}(D_{\gfr_E})$, the thesis will not be affected.

One can go back to Dafermos' computation~\cite[(3.1a)]{file1ABC}, which means in the integral formulation of the PDE, and exploit first the sign information on $f''$: what we get is 
\begin{equation}
\label{E:ruibewgde}
\frac{1}{\varepsilon (\tau-\sigma)}\left\{\int_{\gamma(\tau)}^{\gamma(\tau)+\varepsilon} u(\tau,x) dx
-\int_{\gamma(\sigma)}^{\gamma(\sigma)+\varepsilon} u( \sigma,x)dx
-\int_{\sigma}^{\tau}\int_{\gamma(t)}^{\gamma(t)+\varepsilon}\gfr_{E}( t,x)dxdt\right\}\leq 0
\end{equation}
whenever $\{\tau\leq t\leq\sigma\ ,\ \gamma(t)\leq x\leq \gamma(t)+\varepsilon\}$ is contained in $\{f''\geq0\}$.
We apply this integral relation fixing the characteristic $\gamma(t)=\chi(t,\bar y)$ and $\sigma\equiv \bar t$.
We show below that by the choice of $(\bar t, \bar x)$ one can pass to the limit, first as $\varepsilon\downarrow0$, then as $\tau\downarrow \sigma\equiv \bar t$.

\firststep
\step{Last addend}
When $(t,\gamma(t))\in D_{\gfr_E} $, the space average converges by definition of $D_{\gfr_E}$:
\[
\exists \lim_{\varepsilon\downarrow 0}\frac{1}{\varepsilon}\int_{\gamma(t)}^{\gamma(t)+\varepsilon} \gfr_{E}( t,x)dx=\gfr_{E}(t,\gamma(t))
\]
In particular, when $(t,\gamma(t))\in D_{\gfr_E} $
\[
\gfr_{E}(t,\gamma(t))
= \liminf_{\varepsilon\downarrow 0}\frac{1}{\varepsilon}\int_{\gamma(t)}^{\gamma(t)+\varepsilon} \gfr_{E}( t,x)dx=
 \limsup_{\varepsilon\downarrow 0}\frac{1}{\varepsilon}\int_{\gamma(t)}^{\gamma(t)+\varepsilon} \gfr_{E}( t,x)dx
\]
When $(t,\gamma(t))\notin D_{\gfr_E}$ perhaps the limit does not exists, but the liminf and the limsup are bounded by $\pm G$.
Thanks to this bound and  by~\eqref{E:sionwgnuigwn}, the time-average is the same when the average is done considering only the points $t$ with $(t,\gamma(t))\in D_{\gfr_E}$: denoting 
\[
\Psi^{-1}(D_{\gfr_E})\cap[\bar t-\varepsilon,\bar t+\varepsilon]\times\{\bar y\}=:(\Psi^{-1}(D_{\gfr_E}))^{\reu}_{\bar y}
\]
by~\eqref{E:sionwgnuigwn2222} we thus deduce
\begin{align*}
\gfr_{E}(\bar t,\bar x) 
&=\lim_{\tau\downarrow \sigma}\frac{1}{|\tau-\sigma|}\int_{[\sigma,\tau]\cap(\Psi^{-1}(D_{\gfr_E}))^{\reu}_{\bar y}}\gfr_{E}( t,\gamma(t))dt
\\
&=\lim_{\tau\downarrow \sigma}\frac{1}{|\tau-\sigma|}\int_{[\sigma,\tau]}\liminf_{\varepsilon\downarrow 0}\frac{1}{\varepsilon}\int_{\gamma(t)}^{\gamma(t)+\varepsilon} \gfr_{E}( t,x)dxdt
\\
&=\lim_{\tau\downarrow \sigma}\frac{1}{|\tau-\sigma|}\int_{[\sigma,\tau]}\limsup_{\varepsilon\downarrow 0}\frac{1}{\varepsilon}\int_{\gamma(t)}^{\gamma(t)+\varepsilon} \gfr_{E}( t,x)dxdt\ .
\end{align*}
We conclude by Fatou's lemma, which proves the double limit for the last addend because it implies
\begin{align*}
&\lim_{\tau\downarrow \sigma}\liminf_{\varepsilon\downarrow 0}\frac{1}{|\tau-\sigma|\varepsilon}\int_{\sigma}^{\tau}\int_{\gamma(t)}^{\gamma(t)+\varepsilon} g( t,x)dxdt
\geq \gfr_{E}(\bar t,\bar x) 
  \\
&\gfr_{E}(\bar t,\bar x) \geq  \lim_{\tau\downarrow \sigma}\limsup_{\varepsilon\downarrow 0}\frac{1}{|\tau-\sigma|\varepsilon}\int_{\sigma}^{\tau}\int_{\gamma(t)}^{\gamma(t)+\varepsilon} g( t,x)dxdt.
\end{align*}
\step{First addends} Recall that $u$ is differentiable at $(\bar t, \bar x)$ along $\chi(t,\bar y)$ by~\eqref{E:sionwgnuigwn111} with derivative $\gfr_{B}(\bar t, \bar x)$. By the continuity of $u$ then one has that the first two addends in the LHS converge to $\gfr_{B}(\bar t, \bar x)$:
\begin{align*}
\lim_{\tau\downarrow \sigma}&\lim_{\varepsilon\downarrow 0}\frac{1}{|\tau-\sigma|\varepsilon}\left[\int_{\gamma(\tau)}^{\gamma(\tau)+\varepsilon} u(\tau,x) dx
-\int_{\gamma(\sigma)}^{\gamma(\sigma)+\varepsilon} u( \sigma,x)dx\right]
\\
=&\lim_{\tau\downarrow \sigma}\frac{u(\tau,\gamma(\tau))  
-  u( \sigma,\gamma(\sigma))}{|\tau-\sigma|}
 = \gfr_{B}(\bar t, \bar x).
\end{align*}
\step{Conclusion}
By the double limits proved in the previous sub steps, the inequality~\eqref{E:ruibewgde} yields 
\[
\gfr_{B}(\bar t,\bar x) -\gfr_{E}(\bar t, \bar x) \leq 0.
\]
In particular, $\gfr_{B}(\bar t,\bar x) \leq\gfr_{E}(\bar t, \bar x) $. The reverse inequality comes considering the similar region bounded by $\gamma(t)$ and $\gamma(t)-\varepsilon$ instead of $\gamma(t)$ and $\gamma(t)+\varepsilon$.
As the set $D_{\gfr_E}\cap \ri\{f''\leq 0\}$ is entirely analogous, we conclude thus $\gfr_{B}(\bar t,\bar x) =\gfr_{E}(\bar t, \bar x)$ at those points $\Psi(\bar t,\bar y)=(\bar t, \bar x)\in D_{\gfr_E}$ satisfying~\eqref{E:sionwgnuigwn2222}-\eqref{E:sionwgnuigwn}-\eqref{E:sionwgnuigwn111}. Taking into account that $D_{\gfr_E}\subset \gfr_{B}(\bar t,\bar x) =\gfr_{E}$ by its definition in Lemma~\ref{L:setdifferent}, $D_{\gfr_E}$ is only made of points satisfying~\eqref{E:sionwgnuigwn2222}-\eqref{E:sionwgnuigwn}-\eqref{E:sionwgnuigwn111} and thus $\Ll^2(\Psi^{-1}(D_{\gfr_E}))=0$.
\end{proof}


\begin{lemma}{}\label{L:trivialChi}
Given any characteristic curve $\overline\gamma$, there is a Lagrangian parameterization $\chi$ with $\chi(\cdot,\overline y)=\overline\gamma $ for some $\overline y$.
\end{lemma}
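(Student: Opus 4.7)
The plan is to split the plane along $\overline\gamma$ into the two closed half-planes
\[
\Pi^{\pm} := \{(t,x)\in\R^2 : \pm(x-\overline\gamma(t))\geq 0\},
\]
to build a Lagrangian parameterization on each half having $\overline\gamma$ as the common boundary characteristic, and to glue them together. Setting $\overline y := 0$, I would seek $\chi^+ : \R\times[0,\infty)\to\R$ and $\chi^- : \R\times(-\infty,0]\to\R$, with $\chi^\pm(\cdot,0)=\overline\gamma$ and images contained in $\Pi^+$ and $\Pi^-$ respectively.

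The construction of each $\chi^{\pm}$ would follow verbatim the scheme of \cite[Lemma~17]{file1ABC}, with one preliminary input: for every $(t_0,x_0)\in\Pi^+$ there must exist a characteristic through $(t_0,x_0)$ that stays in $\Pi^+$ (and symmetrically for $\Pi^-$). I would obtain this by a splicing argument for the ODE $\dot\gamma=f'(u(t,\gamma))$, whose right-hand side is continuous: starting from any characteristic $\gamma$ through $(t_0,x_0)$, whenever $\gamma$ meets $\overline\gamma$ at some instant $t_1\neq t_0$ one may replace $\gamma$ by $\overline\gamma$ on the far side of $t_1$. The resulting curve is again $C^1$ (the two branches have matching slope $f'(u(t_1,\overline\gamma(t_1)))$ at the junction) and still satisfies the ODE, hence is still a characteristic. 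Performing this splice at the first crossing on each side of $t_0$ produces a characteristic through $(t_0,x_0)$ which does not cross $\overline\gamma$, and therefore remains in $\Pi^+$ when $x_0\geq \overline\gamma(t_0)$.

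Once this is in hand, the monotone-envelope argument of \cite[Lemma~17]{file1ABC} can be run on $\Pi^+$: pick a countable dense $D^+\subset\Pi^+$ containing $(0,\overline\gamma(0))$, assign $\overline\gamma$ itself to $(0,\overline\gamma(0))$, and to every other point $q\in D^+$ a characteristic through $q$ contained in $\Pi^+$; then define $\chi^+$ as the monotone closure of this countable family. The output $\chi^+$ is continuous, nondecreasing in $y$, it satisfies $\chi^+(\cdot,0)=\overline\gamma$ by construction, and its associated map $(t,y)\mapsto(t,\chi^+(t,y))$ is onto $\Pi^+$ because no characteristic of the selected family crosses $\overline\gamma$ and every point of $\Pi^+$ is approximated from $D^+$. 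A symmetric construction on $\Pi^-$ produces $\chi^-$, and the glued function
\[
\chi(t,y):=\begin{cases} \chi^+(t,y) & y\geq 0,\\ \chi^-(t,y) & y\leq 0, \end{cases}
\]
is continuous across $y=0$ (both branches evaluate to $\overline\gamma(t)$), nondecreasing in $y$, made of characteristics in $t$, and surjective in the sense of Definition~\ref{D:LagrangianParameterization}. Hence $\chi$ is the desired Lagrangian parameterization with $\chi(\cdot,\overline y)=\overline\gamma$.

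The main obstacle is the splicing step: one must verify that the class of characteristics is closed under truncate-and-splice at common points, and that this suffices to produce, for every $(t_0,x_0)\in\Pi^\pm$, at least one characteristic entirely on the correct side of $\overline\gamma$. Both follow from the fact that characteristics depend on $u$ only through the continuous vector field $(1,f'(u))$, so local modifications preserving the ODE and the $C^1$ regularity at the junction yield admissible characteristics; no Lipschitz or uniqueness assumption on the ODE is needed.
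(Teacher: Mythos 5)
Your proof is correct, and it rests on the same two pillars as the paper's own argument: the monotone-family construction of \cite[Lemma~17]{file1ABC}, and the fact that two characteristics meeting at a point are tangent there (both slopes equal $f'(u)$ at the meeting point), so that truncate-and-splice --- equivalently, pointwise $\max$/$\min$ --- of characteristics again yields characteristics. The organization differs, however. The paper does not decompose the plane: it seeds the recursive monotonization with $\mathcal F_0=\{\overline\gamma\}$ and then clips each new characteristic $\gamma_k$ through the $k$-th point of a dense set by a max--min against the already constructed ordered family $\mathcal F_{k-1}$; since $\overline\gamma$ belongs to every $\mathcal F_k$ it survives untouched into the closed family $\mathcal F$, and one sets $\overline y=\theta(\overline\gamma)$. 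Your route --- first confine a characteristic through each point of $\Pi^{\pm}$ to the correct closed half-plane by splicing with $\overline\gamma$ at the first crossings on either side of $t_0$, then run the envelope construction separately on each half, then glue at $y=0$ --- requires the extra confinement step (and a check that the subsequent monotonization of the family in $\Pi^{+}$ preserves containment in $\Pi^{+}$, which it does since max/min of curves lying above $\overline\gamma$ still lie above $\overline\gamma$), but it makes the role of $\overline\gamma$ as a separating curve explicit. Both arguments are sound; the paper's is marginally more economical because clipping against $\mathcal F_{k-1}\ni\overline\gamma$ performs the confinement and the monotonization in a single stroke, whereas yours buys a cleaner geometric picture of why $\overline\gamma$ can be inserted as the fiber over $\overline y$.
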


\begin{proof}{}
Consider points $\{(t_{k},x_{k})\}_{k\in\N}$ dense in the plane. 
Let $\gamma_{k}$ be a characteristic with $\gamma_{k}(t_{k})=x_{k}$, $k\in\N$.
We now modify this family of characteristics recursively in order to make it monotone and to add also the fixed characteristic $\overline\gamma$.

Set $\mathcal F_{0}=\{\overline\gamma\}$. For $k\in\N$ set $\mathcal F_{k}=\mathcal F_{k-1}\cup \{\widetilde \gamma_{k}\}$ where
\[
\widetilde \gamma_{k}(t)\doteq\max_{\gamma_{\ell}}\left\{\gamma_{\ell} (t);\min_{ \gamma^{r}} \left\{\gamma^{r}(t);  \gamma_{k} (t)\right\}\right\}
\ :\   \gamma_{\ell}, \gamma^{r}\in\mathcal F_{k-1}\ \text{with}\  \gamma_{\ell}(t_{k})\leq x_{k} \leq   \gamma^{r}(t_{k}) \,.
\]
If there is no $ \gamma^{r}\in\mathcal F_{k-1} $ satisfying $  x_{k} \leq   \gamma^{r}(t_{k})$ then the minimum above is meant to be just $\gamma_{k}(t)$, while if there is no $\gamma_{\ell} \in\mathcal F_{k-1}$ satisfying $\gamma_{\ell}(t_{k})\leq x_{k}$ then the maximum is defined to be $\min_{ \gamma^{r}} \left\{\gamma^{r}(t);  \gamma_{k} (t)\right\}$.
Notice that $\widetilde\gamma_{k}(t_{k})=x_{k}$.

Let $\mathcal F=\clos\left(\cup_{k\in\N}\mathcal F_{k}\right)$ be the closure of $\mathcal F_{k}$ in the topology of locally uniform convergence.
Since this property hods in each family $\mathcal F_{k}$, for each $\gamma_{1},\gamma_{2}\in \mathcal F$ then 
\begin{itemize}
\item either $\gamma_{1}(t)\leq \gamma_{2}(t)$ for all $t$ or  
\item $\gamma_{1}(t)\geq \gamma_{2}(t)$ for all $t$.
\end{itemize}

Similarly to what explained in the proof of~\cite[Lemma~17]{file1ABC}, the function
\[
 \mathcal F \ni\gamma\mapsto\theta(\gamma)=\sum_{k=0}^{+\infty} \frac{\tanh\left(\gamma(t_{k})\right)}{2^{k}}\in(-2,2)\]
is continuous and strictly order preserving: defining $\theta^{-1}$ its continuous inverse,
\[
\chi(t,y)=\left[\theta^{-1}(y)\right](t)\qquad
y\in \theta\left( \mathcal F\right)
\]
provides a Lagrangian parameterization.

Since by construction $\overline \gamma$ belongs in each $ \mathcal F_{k}$ and thus in $\mathcal F$, just set $\overline y=\theta(\overline\gamma)$.
\end{proof}

\begin{corollary}
\label{C:varioC}{}
Let $S$ be any Borel set which is negligible along any Lagrangian parameterization: we assume that $\Ll^2(\Psi^{-1}(S))=0$ for any Lagrangian parameterization $\chi$, where $\Psi(t,y)=(t,\chi(t,y))$.
Then $S$ is negligible along any characteristic.
\end{corollary}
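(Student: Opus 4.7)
The plan is to argue by contradiction: assuming $S$ is \emph{not} negligible along some characteristic $\overline\gamma$, I will construct a Lagrangian parameterization $\widetilde\chi$ whose preimage $\widetilde\Psi^{-1}(S)$ has strictly positive $\Ll^2$-measure, contradicting the hypothesis. Setting $A\doteq\{t:(t,\overline\gamma(t))\in S\}$, the negated conclusion reads $\Ll^1(A)>0$.

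First I would invoke Lemma~\ref{L:trivialChi} to obtain a Lagrangian parameterization $\chi$ together with a value $\overline y$ satisfying $\chi(\cdot,\overline y)=\overline\gamma$. Applying Fubini to $\chi$ directly is hopeless, since the fibre $\{\overline y\}$ where $\overline\gamma$ appears is itself $\Ll^1$-null. The key trick is to \emph{thicken the fibre over $\overline y$} by reparametrizing the label: pick a continuous, nondecreasing, surjective map $\varphi:\R\to\R$ that is constant equal to $\overline y$ on a non-degenerate interval $[a,b]$---for instance the piecewise linear map equal to the identity on $(-\infty,\overline y]$, constantly $\overline y$ on $[\overline y,\overline y+c]$, and $y\mapsto y-c$ on $[\overline y+c,+\infty)$---and set $\widetilde\chi(t,y)\doteq\chi(t,\varphi(y))$.

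Next I would verify the three axioms of Definition~\ref{D:LagrangianParameterization} for $\widetilde\chi$: continuity is inherited from $\chi$ and $\varphi$; each curve $\widetilde\chi(\cdot,y)=\chi(\cdot,\varphi(y))$ is a characteristic because $\chi(\cdot,\varphi(y))$ already is; monotonicity in $y$ is preserved by composing with the nondecreasing $\varphi$; and the surjectivity of $i_{\widetilde\chi}$ onto $\R^{2}$ follows from the surjectivity of $\varphi$ onto $\R$ combined with that of $i_\chi$. With $\widetilde\chi$ certified, the conclusion is immediate: since $\widetilde\chi(\cdot,y)=\overline\gamma$ for every $y\in[\overline y,\overline y+c]$, the preimage $\widetilde\Psi^{-1}(S)$ contains the rectangle $A\times[\overline y,\overline y+c]$, whence $\Ll^{2}(\widetilde\Psi^{-1}(S))\geq c\,\Ll^{1}(A)>0$, producing the desired contradiction. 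The only conceptual subtlety is recognising that one has the freedom to reparametrize the label of a Lagrangian parameterization by \emph{any} continuous nondecreasing surjection without breaking the axioms; once this flexibility is acknowledged, every remaining check is routine.
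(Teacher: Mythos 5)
Your proposal is correct and follows essentially the same route as the paper: the paper also invokes Lemma~\ref{L:trivialChi} and then thickens the fibre over $\overline y$ by inserting a unit-length interval of labels all mapped to $\overline\gamma$ (which is exactly your $\widetilde\chi=\chi\circ(\mathrm{id},\varphi)$ with $c=1$), concluding by Tonelli. The only cosmetic difference is that you phrase the final step as a contradiction while the paper argues directly, which is the same computation read in the contrapositive.
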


\begin{proof}{}
Given any characteristic curve $\overline\gamma$, there is a Lagrangian parameterization $\chi$ with $\chi(\cdot,\overline y)=\overline\gamma $ for some $\overline y$ by Lemma~\ref{L:trivialChi}.
Define then
\[
\overline\chi(t,y)=\begin{cases}
\chi(t,y) &y\leq \overline y\,,
\\
\overline \gamma(t) &\overline y\leq y\leq \overline y+1\,,
\\
\chi(t,y-1) &y\geq \overline y+1\,.
\end{cases}
\]
By assumption $S$ must be negligible also along this Lagrangian parameterization:
\[
\Ll^{2}\left(\Psi^{-1}(S)\cap\R\times[\overline y, \overline y+1]\right)=0\,,
\]
where we consider $\Psi(t,y)=(t,\overline \chi(t,y))$.
Thus we conclude by Tonelli theorem:
\[
\Haus^{1}\left( i_{\overline\gamma}(\R)\cap S \right)=
\Haus^{1}\left( i_{\overline\gamma}(\R)\cap S \right)\cdot 1
=\Ll^{2}\left(\Psi^{-1}(S)\cap\R\times[\overline y, \overline y+1]\right)=0\,.
\]
\end{proof}

\begin{lemma}
\label{L:vario}{}
The Borel subset $  D_{\gfr_E}$ of $E\cap\{\gfr_E\neq \gfr_B\}$ in~\eqref{finalset} is $\Ll^1$-negligible along any $C^1$ integral curve of $(1,f'(u))$ and it satisfies $\Ll^{2}(E\cap\{\gfr_E\neq \gfr_B\})=\Ll(  D_{\gfr_E})$.
\end{lemma}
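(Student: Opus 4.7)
The statement splits into two independent assertions, and my plan is to dispatch the negligibility along $C^1$ integral curves first, since it is essentially bookkeeping on top of the two preceding results. Since $D_{\gfr_E}$ is a Borel set, Lemma~\ref{L:setdifferent} gives $\Ll^{2}(\Psi^{-1}(D_{\gfr_E}))=0$ for every Lagrangian parameterization $\chi$, with $\Psi(t,y)=(t,\chi(t,y))$. Applying Corollary~\ref{C:varioC} with $S=D_{\gfr_E}$ then yields $\Haus^{1}(i_{\gamma}(\R)\cap D_{\gfr_E})=0$ for every characteristic curve $i_{\gamma}$, which is exactly the negligibility claim along any $C^{1}$ integral curve of $(1,f'(u))$.

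For the measure equality, the inclusion $D_{\gfr_E}\subseteq E\cap\{\gfr_E\neq\gfr_B\}$ is immediate from~\eqref{finalset}, so it suffices to prove that the difference has vanishing $\Ll^{2}$-measure. Inspecting~\eqref{finalset}, a point of $E\cap\{\gfr_E\neq\gfr_B\}$ fails to belong to $D_{\gfr_E}$ if and only if it either lies in $u^{-1}(\clos(\infl(f)))$ or fails the 1D Lebesgue-point condition for the horizontal slice $\gfr_E(\bar t,\cdot)$. Thus
\[
\bigl(E\cap\{\gfr_E\neq\gfr_B\}\bigr)\setminus D_{\gfr_E}\subseteq A\cup B,
\]
where $A\doteq u^{-1}(\clos(\infl(f)))\cap\{\gfr_E\neq\gfr_B\}$ and $B$ denotes the set of $(\bar t,\bar x)$ at which the limit $\lim_{h\to 0}\tfrac{1}{h}\int_{\bar x}^{\bar x+h}\gfr_E(\bar t,x)\,dx$ either does not exist or differs from $\gfr_E(\bar t,\bar x)$. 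I then need to check that both $A$ and $B$ are $\Ll^{2}$-negligible.

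The negligibility of $A$ follows by combining two observations: by the very definition~\eqref{E:broadSource}, $\gfr_B\equiv 0$ on $u^{-1}(\clos(\infl(f)))$; and by Lemma~\ref{L:setNull}\eqref{item:posiisiss}, applied with the closed $\Ll^{1}$-negligible set $N=\clos(\infl(f))$ (negligible by Assumption~\eqref{ass:h}), the Eulerian source $\gfr_E$ vanishes at $\Ll^{2}$-Lebesgue points of $u^{-1}(\clos(\infl(f)))$. Hence $\gfr_E=\gfr_B=0$ holds $\Ll^{2}$-a.e.\ on $u^{-1}(\clos(\infl(f)))$, so $\Ll^{2}(A)=0$. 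The negligibility of $B$ is the standard consequence of Fubini's theorem combined with the 1D Lebesgue differentiation theorem applied, for $\Ll^{1}$-a.e.\ $\bar t$, to the locally integrable slice $\gfr_E(\bar t,\cdot)$, which is in $L^{1}_{\mathrm{loc}}(\R)$ since $\gfr_E$ is bounded and Borel. This gives $\Ll^{2}(B)=0$ and completes the measure equality. I do not anticipate a genuine obstacle: all substantive content---the Dafermos-type integral inequality and the passage from Lagrangian parameterizations to arbitrary characteristics---is already encapsulated in Lemma~\ref{L:setdifferent} and Corollary~\ref{C:varioC}, and the present lemma only assembles those tools with two elementary measure-theoretic remarks.
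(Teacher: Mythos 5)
Your proposal is correct and follows essentially the same route as the paper: the negligibility along characteristics is obtained by combining Lemma~\ref{L:setdifferent} with Corollary~\ref{C:varioC}, and the measure equality is reduced to the $\Ll^2$-negligibility of $u^{-1}(\clos(\infl(f)))\cap\{\gfr_E\neq\gfr_B\}$ (via the definition of $\gfr_B$ and Lemma~\ref{L:setNull}) together with the Fubini/Lebesgue-differentiation argument for the slice-wise Lebesgue-point set. No gaps.
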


\begin{proof}
Let us first prove that $ D_{\gfr_E}$ is itself a subset of $E\cap\{\gfr_E\neq \gfr_B\}$ with the same $\Ll^2$-measure of $E\cap\{\gfr_E\neq \gfr_B\}$.
For brevity, set $Z=u^{-1}\left(\clos({\infl(f)})\right) $.
By definition of $D_{\gfr_E}$ in Lemma~\ref{L:setdifferent} the set $E\cap  D_{\gfr_E}^\complement$ is equal to
\begin{align*}
& E\bigcap  \left(\{\gfr_E=\gfr_B\} 
\bigcup
\left(Z\setminus\{\gfr_E=\gfr_B\} \right)
 \bigcup
\left\{\exists\lim_{h\to0}\frac{1}{h}\int_{\bar x}^{\bar x+h} \gfr_{E}(\bar t,x)dx= \gfr_E(\bar t,\bar x)\right\}^\complement
\right) \ .
\end{align*}
By definition of $\gfr_B$ and by Lemma~\ref{L:setNull} then $\Ll^2\left(Z\setminus\{\gfr_E=\gfr_B\} \right)=0$. By Lebesgue differentiation theorem and by Fubini theorem, also the last set where $\gfr_E$ differs from the Lebesgue value on its $t$-sections is $\Ll^2$-negligible.
We conclude thus that
\[
\Ll^2(E\cap\{\gfr_E\neq \gfr_B\}\cap  D_{\gfr_E}^\complement)=0.
\]

The set $D_{\gfr_E}$ is negligible along any Lagrangian parameterization by Lemma~\ref{L:setdifferent} {}and it is $\Haus^{1}$-negligible along any characteristic by Corollary~\ref{C:varioC}.
\end{proof}


\begin{theorem}
\label{T:nonemptyint}
Assume that $\Ll^{1}(\clos({\infl(f)}))=0$.
Let $\gfr_E$ be any Eulerian source term.
Then the function
\[
\gfr_{U} (t,x)
\doteq 
\begin{cases}
\gfr_{B} (t,x)
&\text{on } E\setminus   D_{\gfr_E}\text{ and on }u^{-1}\left(\clos({\infl(f)})\right) \\
\gfr_{E} (t,x)
&\text{on }  D_{\gfr_E}\text{ and on $\R^2\setminus E$}
\end{cases}
\]
is both an Eulerian source term and a Broad source term.
\end{theorem}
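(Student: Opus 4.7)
The plan is to prove the two claims separately by comparing $\gfr_U$ with $\gfr_B$ (for the Broad property) and with $\gfr_E$ (for the Eulerian property), exploiting that in each case the discrepancy set is negligible for the relevant notion.

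\textbf{Broad property.} First I would observe that $\gfr_U$ coincides with $\gfr_B$ except on the set $D_{\gfr_E} \cup (\R^2 \setminus E)$. By Lemma~\ref{L:vario} combined with Corollary~\ref{C:varioC}, the set $D_{\gfr_E}$ is $\Haus^1$-negligible along every characteristic curve. Moreover, the complement $\R^2 \setminus E$ is itself $\Haus^1$-negligible along any characteristic curve: indeed, by \cite[Theorem~30]{file1ABC} (available since inflections are negligible) $u$ is Lipschitz along every characteristic $i_\gamma$, so $u\circ i_\gamma$ is differentiable $\Haus^1$-almost everywhere, which forces $i_\gamma^{-1}(E)$ to have full $\Haus^1$-measure in the parameter domain. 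Since $\gfr_B$ is a Broad source, for every characteristic curve $i_\gamma$ the function $\gfr_U \circ i_\gamma$ agrees $\Haus^1$-a.e.\ with $\gfr_B\circ i_\gamma$, hence defines the same distribution on $\R$. Therefore $\gfr_U$ is a Broad source.

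\textbf{Eulerian property.} Here I would show $\gfr_U = \gfr_E$ $\Ll^2$-almost everywhere, which is enough since equal-a.e.\ functions define the same distribution. By construction $\gfr_U = \gfr_E$ on $D_{\gfr_E} \cup (\R^2\setminus E)$, so only the set
\[
A \doteq (E \setminus D_{\gfr_E}) \cup u^{-1}(\clos(\infl(f)))
\]
needs attention. On $E \setminus D_{\gfr_E}$, $\gfr_U=\gfr_B$, and Lemma~\ref{L:vario} gives $\Ll^2(E \cap \{\gfr_E \neq \gfr_B\} \cap D_{\gfr_E}^\complement) = 0$, so $\gfr_U = \gfr_E$ holds $\Ll^2$-a.e.\ there. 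On $u^{-1}(\clos(\infl(f)))$, $\gfr_U = \gfr_B = 0$ by the definition~\eqref{E:broadSource}; applying Lemma~\ref{L:setNull}\eqref{item:posiisiss} with $N = \clos(\infl(f))$ (which is $\Ll^1$-negligible by assumption and closed), we get $\gfr_E = 0$ at every $\Ll^2$-Lebesgue point of $u^{-1}(N)$, hence $\Ll^2$-a.e.\ on this set. Combining these two facts yields $\gfr_U = \gfr_E$ $\Ll^2$-a.e.\ on $A$, and therefore $\Ll^2$-a.e.\ on $\R^2$, so $\gfr_U$ represents the same distribution as $\gfr_E$ and is an Eulerian source.

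\textbf{Expected difficulty.} There is essentially no analytic difficulty left at this stage: all the work has been done in Lemmas~\ref{L:setNull}, \ref{L:setdifferent} and~\ref{L:vario}, together with Corollary~\ref{C:varioC}. The only care is bookkeeping the decomposition of $\R^2$ into $E \setminus D_{\gfr_E}$, $u^{-1}(\clos(\infl(f)))$, $D_{\gfr_E}$ and $\R^2\setminus E$ (which is not a partition, but the overlaps are irrelevant: on $E \cap u^{-1}(\clos(\infl(f)))$ the value $\gfr_B=0$ is prescribed consistently, and $D_{\gfr_E}\subset E\setminus u^{-1}(\clos(\infl(f)))$ is disjoint from the first two sets by its very definition in~\eqref{finalset}) and making sure each piece is handled by the correct lemma: Broad-type negligibility (along characteristics) for the $\gfr_U$-versus-$\gfr_B$ comparison, Eulerian-type negligibility ($\Ll^2$-a.e.) for the $\gfr_U$-versus-$\gfr_E$ comparison.
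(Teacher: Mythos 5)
Your proposal is correct and follows essentially the same route as the paper's proof: the Eulerian part via $\gfr_U=\gfr_E$ $\Ll^2$-a.e.\ (using Lemma~\ref{L:setNull} on $u^{-1}(\clos(\infl(f)))$ and Lemma~\ref{L:vario} on $E\setminus D_{\gfr_E}$), and the Broad part via $\Haus^1$-negligibility of $D_{\gfr_E}$ and $\R^2\setminus E$ along every characteristic (Lemma~\ref{L:vario} with Corollary~\ref{C:varioC}, and \cite[Theorem~30]{file1ABC} with the definition of $E$). The only difference is that you spell out slightly more of the bookkeeping than the paper does, which is harmless.
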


\begin{proof}
Notice that $\gfr_E=\gfr_B$ $\Ll^2$-a.e.~on $ u^{-1}\left(\clos({\infl(f)})\right) $ by definition of $\gfr_B$ and by Lemma~\ref{L:setNull}.
Moreover $\gfr_E=\gfr_B$ $\Ll^2$-a.e.~on $E\setminus    D_{\gfr_E}$ by Lemma~\ref{L:vario}.
The function $\gfr_U$ is therefore an Eulerian source because $\gfr_U= \gfr_E$ in $\D'(\R^2)$. 

As $  D_{\gfr_E}$ is negligible along any characteristic curve by Lemma~\ref{L:vario} and $\R^2\setminus E$ is negligible along any characteristic curve by \cite[Theorem 30]{file1ABC} and by definition of $E$ \cite[equation (3.4) and Lemma 38]{file1ABC}, then $\gfr_U$ is still a Broad source, as $\gfr_{B}$ was.
\end{proof}

\section{Lagrangian parameterizations may be Cantor functions}
\label{S:Cantoparam}

{{}
We remind that for the quadratic flux $f(z)=z^{2}/2$ if $u$ has bounded variation then Lagrangian parameterizations can be taken absolutely continuous; in addition, any relative Lagrangian source term is also an admissible Eulerian source term, see~\cite[\S~2.1]{file1ABC}. 
Nevertheless, continuous solutions are necessarily $\frac12$-H\"older continuous but in general they do not have bounded variation, thus this is not the general situation.

The example in this section shows that, even with $f(z)=z^{2}/2$, one may have that any Lagrangian parameterization $\chi$ is not an absolutely continuous function, but it has a Cantor part.
In particular, whatever Lagrangian parameterization one chooses, there are Lagrangian sources relative to it that are not Eulerian sources: the Broad source shall be carefully chosen.

}

More in detail, this section aims at constructing
\begin{itemize}
\item a continuous solution $u$ of a balance law
\begin{equation}
\label{E:Burgers}
\pt u(t,x) + \px (u^{2}(t,x)) =  \gfr(t,x)
,\qquad 
|\gfr(t,x)|\leq 1\ ;
\end{equation}
\item a compact set $K\subset\R^{2}$ of positive Lebesgue measure whose intersection with \emph{any} characteristic curve of $u$ is $\Haus^{1}$-negligible.
\end{itemize}

{{}We aim at introducing here in a simpler setting the machinery that produces also the more complex counterexample of \S~\ref{Ss:nondifferentiabilityset}: we thus leave an additional parameter $d_{i}$ that in this section will turn out to be fixed constantly equal to $16$, while it will be relevant in \S~\ref{Ss:nondifferentiabilityset}.
We briefly outline the construction before presenting it:}
\begin{enumerate}
\item We partition $\R^{2}$ in a rectangle $Q_{0}$ and its complement.
We define $u=0$ on the complement of $Q_{0}$.
\item At the first step we subdivide $Q_{0}$ into finitely many sub-strips, say $d_{1}$ sub-strips equal to $S_{1}$. 
The strip $S_{1}$ is made of two sub-rectangles which are a translation of a given rectangle $Q_{1}\subset Q_{0}$ and a remaining `corridor'.
We assign a value to $u$ in each `corridor' as the derivative of a suitable family of curves $x=\gamma(t)$ covering the region of the `corridor'.
We do it in such a way that---in this closed region with $2d_{1}$ equal rectangular holes---$u$ will be a $C^{1}$ function.
By Cauchy uniqueness theorem for ODEs, all characteristic curves of $u$ in this region of the `corridor' must then coincide with the family that we assign.
\item At the $i$-th step, $i\in\N$, $u$ is defined as a $C^{1}$ function on the complement of finitely many disjoint equal rectangles which are translations of a given rectangle $Q_{i}\subset Q_{i-1}$.
We subdivide $Q_{i}$ recursively into finitely many sub-strips, say $d_{i+1}$ sub-strips. Each strip is made by two rectangles translation of $Q_{i+1}\subset Q_{i}$ and a remaining `corridor'. We assign a value to $u$ on each `corridor' so that this extension of $u$ becomes a $C^{1}$-function on the closed region with $2^{i}d_{1}\cdot\dots \cdot d_{i}$ equal rectangular holes which are a translation of $Q_{i+1}$.
\item By the previous steps we will have assigned a value to $u$ on the whole complement of a compact Cantor-like set $K\subset \R^{2}$ of positive Lebesgue measure but with empty interior: we will be able to assign a unique value of $u$ on $K$ extending $u$ to $K$ by continuity.
\item By the details of our construction, \emph{every} characteristic curve of $u$ will intersect $ K$ in at most one point.
We obtain this property by requiring that \emph{every} characteristic curve must intersect at most \emph{one} of the disjoint translation of $Q_{i}$ considered at the $i$-th step. This is possible because we impose $u\in C^{1}$ on compact subsets of the open set $\R^{2}\setminus K$, therefore characteristic curves of $u$ are uniquely defined in the open set $\R^{2}\setminus K$: we force that every characteristic curve reaching the boundary of a translation of $Q_{i}$ does not intersect any other translation of $Q_{i}$.
\end{enumerate}
After performing this program the counterexample will be ready: the counter-image of the $\Ll^{2}$-non-negligible set $K$ by any Lagrangian parameterization must have null measure, because each vertical section of this counterimage is made by the single point of intersection with $K$.
This shows that there is no Lagrangian parameterization satisfying the absolute continuity of Definition~\ref{D:LagrangianParameterization}

\begin{figure}[htp!]
\input{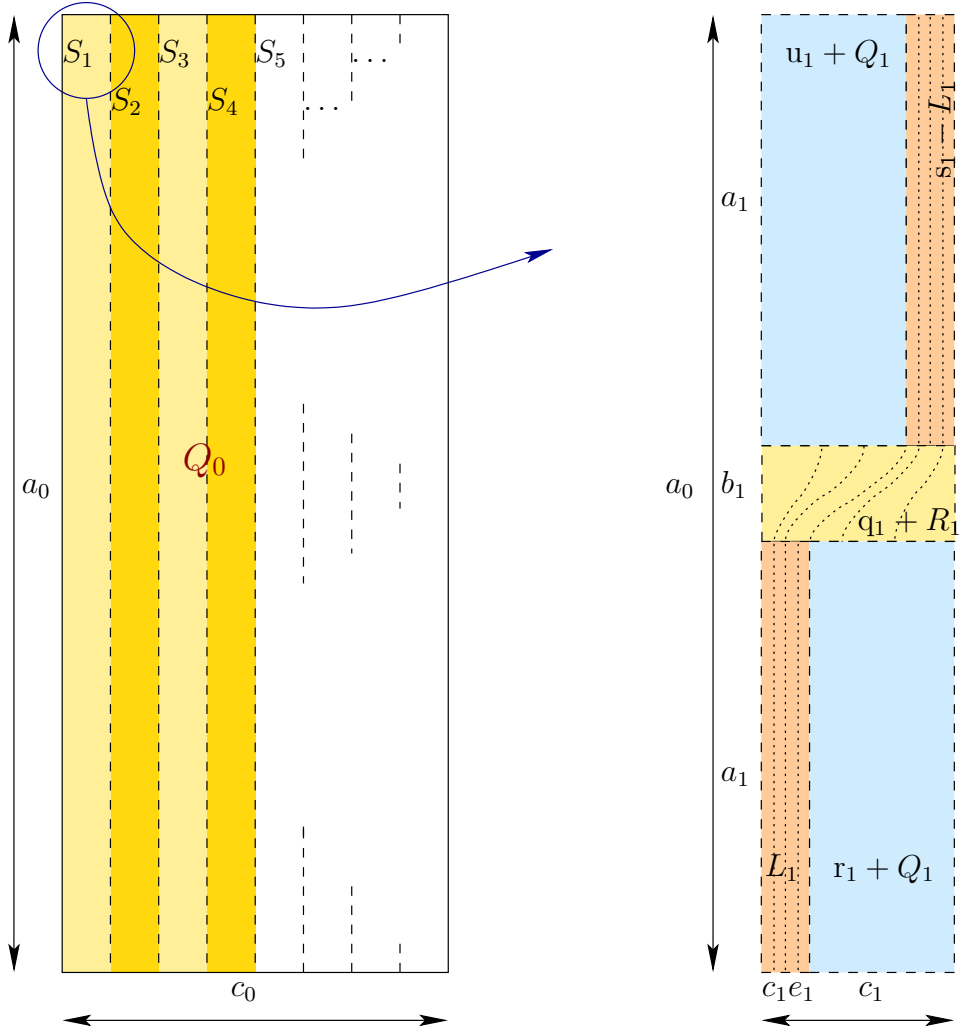}
\caption{The initial region $Q_{0}$ and one of its strips $S_{1}$. Proportions are distorted. Dashed lines suggest the qualitative behavior of characteristic curves.}
\label{fig:esempioSemplice}
\end{figure}

\firststep
\step{Definition of the iterative regions (Figure~\ref{fig:esempioSemplice})}
We define here in an iterative way a finer and finer partition of a rectangle $Q_{0}$.
This construction is based on parameters $a_{i}, b_{i}, c_{i}, d_{i}, e_{i}$ that we write explicitly in the next step.
We only mention here the relation among $c_{i},d_{i}, e_{i}$
\begin{align}
\label{E:recreldec}
d_{i}c_{i}(1+e_{i})=c_{i-1} \qquad\Rightarrow\qquad c_{i}(1+e_{i})=\frac{c_{i-1}}{d_{i}}
.
\end{align}
The procedure is visualized in Figure~\ref{fig:esempioSemplice}. We consider the basic domain 
\[
Q_{i-1}=[0,a_{i-1}]\times[0,c_{i-1}]
\qquad i\in\N
\] 
and we partition it at the $i$-th step into $d_{i}$ vertical sub-strips
\[
S_{i} = [0,a_{i-1}]\times\left[0, c_{i}\left(1+e_{i}\right)\right]
\] 
of size $a_{i-1}\times c_{i}(1+e_{i})$: this is possible by the relation~\eqref{E:recreldec}.
We obtain the flowing partition:
\[
Q_{i-1}=[0,a_{i-1}]\times [0,c_{i-1}]=\bigcup_{j=0}^{ d_{i}-1} \left(j\rv_{i} +S_{i}\right)
\qquad
\rv_{i}=\left (0,\frac{c_{i-1}}{d_{i}}\right)=\left (0,c_{i}\left(1+e_{i}\right)\right) .
\]
The vertical strip $S_{i}$ is then partitioned into three horizontal strips as follows:
\begin{itemize}
\item The intermediate strip is the translation of a rectangle
\[
R_{i} =[0,b_{i}]\times \left[ 0,c_{i}\left(1+e_{i}\right)\right] 
.
\]
This strip provides space for smooth junctions among vertical curves.
\item The two extremal horizontal strips have size $a_{i} \times c_{i}(1+e_{i})$. Each extremal horizontal strip is the union of two vertical rectangles which are translations of 
\[
L_{i}  = [0,a_{i}]\times \left[0, c_{i}e_{i}\right]
\qquad
Q_{i} = [0,a_{i}]\times\left[ 0 , c_{i}\right]
\]
\end{itemize}
This is possible if $2{a_{i}}+b_{i}=a_{i-1}$.
One can define vectors for the translation and one can write
\begin{gather*}
\qr_{i}=\left(a_{i},0\right)
\qquad
\rr_{i}=\left(0,c_{i}e_{i}\right)
\qquad
\ur_{i}=(a_{i}+b_{i},0)
\qquad
\sr_{i}=\left(a_{i-1}, c_{i}(1+e_{i})\right)
\\
S_{i}=L_{i}\bigcup\left[\rr_{i}+Q_{i}\right] \bigcup [\qr_{i}+ R_{i} ]\bigcup \left[\ur_{i}+Q_{i} \right] \bigcup  \left[ \sr_{i}- L_{i} \right] 
.
\end{gather*}
Let us term $Q_{j_{1}\dots j_{i}}^{h_{1}\dots h_{i}}$ the rectangles generated at the $i$-th step, for $ h_{\ell}\in\{0,1\}$, $j_{\ell}\in\{0,\dots,d_{\ell}-1\}$:
\[
Q_{j_{1}\dots j_{i}}^{h_{1}\dots h_{i}}= \prom_{j_{1}\dots j_{i}}^{h_{1}\dots h_{i}} +Q_{i},
\quad 
\prom_{j_{1}\dots j_{i}}^{h_{1}\dots h_{i}} = \sum_{\ell=1}^{i}j_{\ell}\rv_{\ell} +\sum_{\ell=1}^{i} h_{j_{\ell}} \ur_{\ell} +  \sum_{\ell=1}^{i} (1-h_{j_{\ell}})\rr_{\ell}.
\]
Since at each step such rectangles are nested into previous ones, the translation vector $\prom_{j_{1}\dots j_{i}}^{h_{1}\dots h_{i}}$ takes into account which is the list of nesting rectangles: $j_{\ell}$ tells us in which strip of the $\ell$-th step we are it, $h_{j_{\ell}}$ tells us if at the $\ell$-th step we are in the low right triangle of the corresponding strip---in case $h_{j_{\ell}}=1$---or if we are instead in the upper left one---in case $h_{j_{\ell}}=0$.

The remaining regions of shape $L_{i}$ and $R_{i}$ are not partitioned anymore at future steps.

\step{Setting up the parameters}
Set for $i\in\N\cup\{0\}$
\[
a_{i}\doteq 2^{-i-1}(1+2^{-i}).
\]
We impose that for $i\in\N$ the interval of length $a_{i-1}$ is divided into three subintervals, two of which of length $a_{i}$ and one of length $b_{i}$:
\[
b_{i}\doteq a_{i-1}-2 a_{i}=2^{-i}(1+2^{-i+1}) -2\cdot2^{-i-1}(1+2^{-i}) = 2^{-i}[\cancel{1}+2^{-i+1}\cancel{-1}-2^{-i}]=2^{-2i}.
\]

We determine now values of $c_{i},d_{i},e_{i}$ satisfying~\eqref{E:recreldec} plus an additional requirement that we need later.
Example~\ref{Ex:raccordo} constructs a curve $x=\gamma(t)\in C^{2}([0, b_{i}])$ which satisfies  
\begin{subequations}
\label{E:caratteristicaesempio}
\begin{align}
&0\leq\ddt u\circ {(t,\gamma(t))} \leq 1 \qquad  u\circ {(t,\gamma(t))}\doteq \ddt\gamma(t) \qquad \forall t\in[a,b_{i}]
\\
&\ddt u(t,\gamma(t))\Big|_{t=0}=u(0,\gamma(0))=0=u(b_{i},\gamma(b_{i}))=\ddt u(t,\gamma(t))\Big|_{t=b_{i}} \ .
\end{align}
\end{subequations}
Easy computations, reported for completeness in Example~\ref{Ex:raccordo}, show the estimate
\begin{align*}
\max_{s,t\in[0,b_{i}]} |\gamma(t)-\gamma(s)| < 2f(b_{i}/2)=2^{-4i-1}.
\end{align*}
For every $0\leq c<2f(b_{i}/2) $, Example~\ref{Ex:raccordo} also constructs a $C^{2}$ curve satisfying~\eqref{E:caratteristicaesempio} and
\[
\gamma\left(b_{i}\right)-\gamma(0)=c. 
\]
Define therefore the following sequence of positive numbers, lower than $2f(b_{i}/2)$:
\begin{align*}
c_{i}\doteq \frac{2f(b_{i}/2)}{\Pi_{j=1}^{i}(1+e_{j})}=\frac{2^{-4i-1}}{\Pi_{j=1}^{i}(1+2^{-j})},
\quad c_{0}=1/2,\qquad
e_{i}\doteq 2^{-i},
\qquad i\in\N.
\end{align*}
Define finally the integer ratio
\begin{align*}
d_{i}=\frac{c_{i-1}}{c_{i}(1+e_{i})} =\frac{16\cdot \cancel{2^{-4i-1}}}{\cancel{\Pi_{j=1}^{i-1}(1+2^{-j})}}\frac{\cancel{(1+2^{-i})}\cancel{\Pi_{j=1}^{i-1}(1+2^{-j})}} { \cancel{2^{-4i-1}}}\frac{1}{\cancel{1+2^{-i}}}=16
.
\end{align*}

A table of the first values is the following
\[
\begin{array}[h]{r|c|c|c|c|c}
&a_{i} &{b_{i}}&c_{i} &d_{i}&e_{i}
\\
\hline
0 & 1& -&1/2& -&-\\
1 &{3}/{8}&{1}/{8}&1/48& 16&1/2\\
2  &{5}/{32}&{1}/{32}&1/960&16&1/4\\
\vdots&\vdots&\vdots&\vdots&\vdots&\vdots
\end{array}
\]

\step{Measure of the Cantor set}
We compute the measure of the set 
\[
K=\bigcap_{i\in\N}\bigcup_{j_{1}=0}^{d_{1}-1}\dots \bigcup_{j_{i}=0}^{d_{i}-1}\bigcup_{h_{1},\dots , h_{i}=0}^{1}Q_{j_{1}\dots j_{i}}^{h_{1}\dots h_{i}}
\]
Let us describe the above intersection step by step.
At the first step there are $d_{1}$ stripes translations of $S_{1}$, each of which generates two rectangles which are translation of $Q_{1}=[0,a_{1}]\times [0,c_{1}]$. Therefore
\[
\Ll^{2}\left(\bigcup_{j=0}^{d_{1}-1}\bigcup_{h=0}^{1} Q^{h}_{j} \right)= 2d_{1}c_{1}a_{1}\stackrel{\eqref{E:recreldec}}{=} 
   2 \frac{ c_{0}}{1+e_{1}}a_{1}.
\]
At the second step, each rectangle $j_{\ell}\rr_{1}+h_{\ell}\rr_{1} +Q_{0}$, for $h_{\ell}\in\{0,1\}$ and $j_{\ell}\in\{0,\dots, d_{1}-1\}$, produces $2d_{2}$ smaller rectangles of size $a_{2}\times c_{2}$: there are thus $2d_{1}\cdot 2d_{2}$ rectangles of size $a_{2}\times c_{2}$.
More generally, each rectangle $Q^{h_{1}\dots h_{i-1}}_{j_{1}\dots j_{i-1}}$ generates $2d_{i}$ rectangles each of size $c_{i}a_{i}$, and there are $2d_{1}\cdot\dots\cdot 2 d_{i-1}$ such rectangles. We can hence conclude that at the $i$-th step
\begin{align*}
\Ll^{2}\left(\bigcup_{j_{1}=0}^{d_{1}-1}\dots \bigcup_{j_{i}=0}^{d_{i}-1}\bigcup_{h_{1},\dots , h_{i}=0}^{1}Q_{j_{1}\dots j_{i}}^{h_{1}\dots h_{i}}\right)
&= 
2^{i} d_{1}\cdots d_{i-1}d_{i}c_{i} a_{i} 
\\
&\stackrel{\eqref{E:recreldec}}{=} 
2^{i}d_{1}\dots d_{i-1}\frac{c_{i-1}}{1+e_{i}}a_{i}
\\
&\stackrel{\eqref{E:recreldec}}{=} 
2^{i}a_{i} \frac{c_{0}}{\prod_{j=1}^{i}(1+e_{j})}
\end{align*}
As the series $\sum_{j=0}^{\infty} e_{j}$ converges, by the elementary estimate $\prod_{j=1}^{i}(1+e_{j})\leq \exp(\sum_{j=1}^{i} e_{j})$ the infinite product converges and in the limit we get
\begin{align}
\Ll^{2}(K)=\lim _{i\to\infty} \cancel{2^{i}}\cdot 2^{\cancel{-i}-1}(1+2^{-i}) \cdot \frac{2^{-1}}{\prod_{j=1}^{i}(1+2^{-j})} &=\frac{  2^{-3}}{\prod_{j=1}^{\infty}(1+2^{-j})} 
\notag
 \\
\label{E:misuraKprimo}
&>\frac{  2^{-3}}{\prod_{j=1}^{\infty}(1+2^{-j})^{2^j}}=\frac{1 }{8e} \ .
\end{align}
In particular $K$ is non-negligible.
We also observe that it is compact, since the $i$-th element of the intersection is the union of finitely many closed rectangles contained in $Q_{0}$.

\step{Assigning $u$ and characteristic curves}
We divided $Q_{0}$ into different regions in order to facilitate the definition of the characteristic curves. 
Set:
\begin{itemize}
\item $u\equiv 0$ in each region which is created at the $i$-th step as a translation of $L_{i}$, $i\in\N$.
\item define in $R_{i}$ characteristic curves providing smooth junctions, as in Example~\ref{Ex:raccordo}, from
\begin{align*}
\begin{array}{l}
u=0 \text{ on } \{0\}\times [0,c_{i}e_{i}]\\
u=0 \text{ on } \{0\}\times[c_{i}e_{i}, c_{i}(1+e_{i})]
\end{array} 
\text{ to } 
\begin{array}{l}
u=0 \text{ on } \{b_{i}\}\times[0,c_{i}]\\
u=0 \text{ on } \{b_{i}\}\times[c_{i},c_{i}(1+e_{i})].
\end{array} 
\end{align*}
We associated in this way characteristic curves, and therefore $u$, to each fundamental domain $R_{i}$.
Characteristic curves are defined in the region $\prom_{j_{1}\dots j_{i}}^{h_{1}\dots h_{i}}+\qr_{i}+R_{i}$, translation of $R_{i}$, as the above characteristic curves translated by the same vector $\prom_{j_{1}\dots j_{i}}^{h_{1}\dots h_{i}}+\qr_{i}$.
\end{itemize}
The dashed lines in the RHS of Figure~\ref{fig:esempioSemplice} give an idea of the qualitative behaviour.
We have
\[
u\in C^{1}(Q_{0}\setminus K) \cap C(Q_{0}).
\]
The unique continuous extension of $u$ to $Q_{0}$ vanishes on $K$.

\step{Conclusion} By~\eqref{E:misuraKprimo} the set $K$ has positive measure.
We now notice that every characteristic curve intersects $K$ in a single point, and countably many of them in two points.
Indeed, fix any $i\in\N$. The iterative construction is made in such a way that each characteristic curve intersecting a region $Q_{j_{1}\dots j_{i}}^{h_{1}\dots h_{i}}$ is uniquely defined out of it. In particular, if \emph{any} characteristic curve of the continuous function $u$ intersects a rectangle $Q_{j_{1}\dots j_{i}}^{h_{1}\dots h_{i}}$ it does not intersect in the complement of $Q_{j_{1}\dots j_{i}}^{h_{1}\dots h_{i}}$ other regions constructed as translation of any $Q_{i}$---with the exception of the curves on the boundary of  $Q_{j_{1}\dots j_{i}}^{h_{1}\dots h_{i}}$, which run on the boundary of another equal rectangle. This implies that the counter-image of $K$ by \emph{any} Lagrangian parameterization must have null measure, as each vertical section of this counterimage is made by the single point which is the intersection of $K$ with the relative characteristic composing the parameterization---or by two such points, for countably many curves.

We show in Figure~\ref{fig:liste} at a better scale the iterative horizontal subdivision of the height $a_{0}$ first in two extremal horizontal strips of height $a_{1}$ (blue ones) and a central strip of height $b_{1}$ (central one), then the subdivision of each horizontal strip of height $a_{1}$ into two horizontal strips of height $a_{2}$ (blue ones) and a central strip of height $b_{2}$, and so on al later iterations. The compact $K$ lies within blue regions.
\begin{figure}
\includegraphics[width=.4\linewidth]{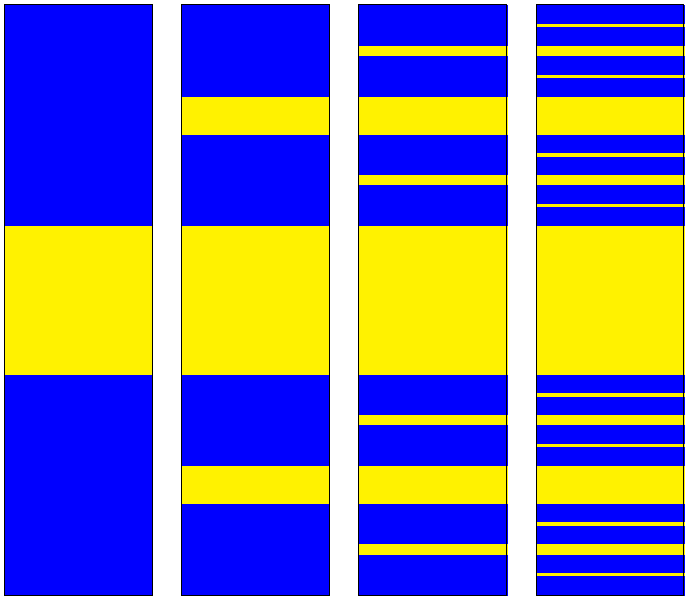}
\includegraphics[width=.4\linewidth]{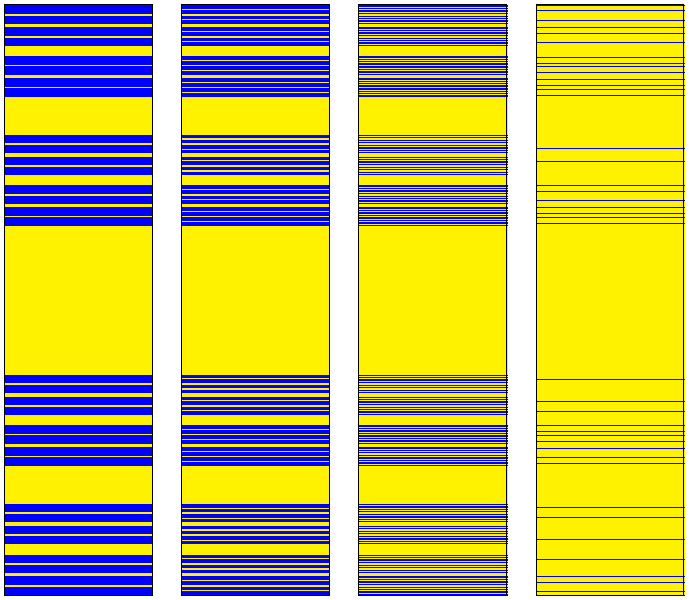}
\caption{From left to right, figures illustrate the iterative horizontal subdivision of the height $a_{0}$---left figure---first in two extremal horizontal strips of height $a_{1}$ (blue ones) and a central strip of height $b_{1}$ (central one), then---second figure---the subdivision of each horizontal strip of height $a_{1}$ into two horizontal strips of height $a_{2}$ (blue ones) and a central strip of height $b_{2}$, and so on al later iterations. $K$ lies within blue regions. The regions $L_{i}$ are so thin, even after two iterations, that they are not visible in such a picture.}
\label{fig:liste}
\end{figure}

\section{Non-negligible points of non-differentiability along characteristics}
\label{Ss:nondifferentiabilityset}

The example in this section shows the following: even when $u$ is Lipschitz continuous along characteristics and the flux $f$ is convex, there could be a compact, $\Ll^{2}$-positive measure set $K\subset\R^{2}$ of points where $u$ fails to be differentiable along characteristics, whichever characteristic curves one chooses through the point.
One can also have $u\in C^{\infty}(\R^{2}\setminus K)$, but clearly it will be just continuous on the whole region.
This provides as well a second example of non-absolute continuity of Lagrangian parametrizations, indeed this does not contradict the Lipschitz continuity of $u$ along any characteristic curves \cite[Theorem~30]{file1ABC}.
Such continuous solution $u$ is not H\"older continuous, for any exponent

The behavior in this section is prevented by the $\alpha$-convexity of the flux~\cite[Theorem~1.2]{estratto}: we give an example where the convex flux function vanishes at $0$ together with all its derivatives, while it is uniformly convex out of the origin.

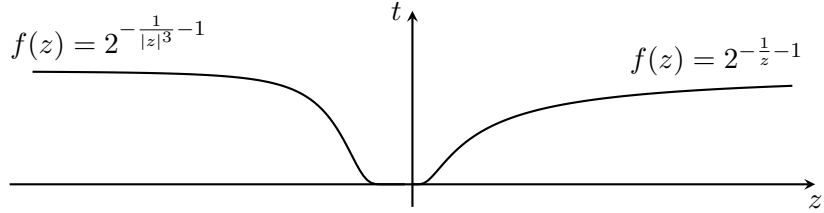
\begin{figure}[ht]
\begin{tikzpicture}[thick]\def\assexLung{5}
\def\asseyLung{2}
\draw[-stealth](-0.3-\assexLung,0)--(\assexLung+0.3,0) node[below]{$z$};
\draw[-stealth](0,-0.3)--(0,\asseyLung+0.3) node[left]{$t$};
\draw[samples=100, domain=0.0001:5,yscale=3]    plot (\x,{2^(-1/\x-1)}); \draw (4,1.3) node[above] {$f(z)=2^{-\frac{1}{z}-1}$};
\draw[samples=100, domain=-0.1:-5,yscale=3] plot (\x,{2^(-1/(-\x*\x*\x)-1)}); 
\draw (-4,1.5) node[above] {$f(z)=2^{-\frac{1}{|z|^3}-1}$};
\end{tikzpicture}
\caption{Flux function $f$ considered in \S~\ref{Ss:nondifferentiabilityset}. Close to the origin, $f$ is strictly convex, but not uniformly convex. This flux function is $C^{\infty}(\R)$, with all derivatives vanishing at the origin, but of course it is not analytic}
\label{F:convexfluxnotunif}
\end{figure}

Define the flux function, for $|z|\leq 1$, given by (Figure~\ref{F:convexfluxnotunif})
\[
f(z)=\begin{cases}2^{-\frac{1}{z}-1} &z\geq 0, \\ 2^{-\frac{1}{|z|^{3}}-1} &z<0. \end{cases}
\]
We mimic the construction of \S~\ref{S:Cantoparam}, modifying the regions $L_{i}$ and the parameters.

\firststep
\step{Setting up the parameters}
Similarly to \S~\ref{S:Cantoparam} we set up parameters $a_{i},b_{i},c_{i}$ satisfying
\begin{align*}
&a_{i-1}=2 a_{i}+b_{i}, &&
c_{i-1}=c_{i}(1+d_{i}), &&
c_{i}=2f(b_{i}/2) &&
i\in\N ,
\end{align*}
so that the following properties for the recursive construction are satisfied:
\begin{itemize}
\item An interval of length $a_{i-1}$ is the disjoint union of two intervals of length $a_{i}$ plus an interval of length $b_{i}$.
\item There exists a $C^{2}$ curve  $x=\gamma(t)$, for $t\in[0, b_{i}]$ which satisfies relations
\begin{subequations}
\label{E:caratteristicaesempio2}
\begin{align}
&0\leq\ddt u\circ {(t,\gamma(t))} \leq 1 \qquad  f'(u)\circ {(t,\gamma(t))}\doteq \ddt\gamma(t) \qquad \forall t\in[a,b_{i}]
\\
&\ddt u(t,\gamma(t))\Big|_{t=0}=u(0,\gamma(0))
=0=u(b_{i},\gamma(b_{i}))=\ddt u(t,\gamma(t))\Big|_{t=b_{i}} \ .
\end{align}
\end{subequations}
and
\begin{align*}
&\gamma\left(b_{i}\right)-\gamma(0)=\frac{c_{i}}{\lambda_{i+1}}=c_{i+1}, 
&& \lambda_{i}\doteq  1+d_{i}. 
\end{align*}
\end{itemize}
The last point is given again by Example~\ref{Ex:raccordo}. In particular, one can fix
\begin{subequations}
\begin{align}
\label{E:ai}
&a_{i}=2^{-i-1}(1+2^{-i})
&&b_{i}=2^{-2i}
&&a_{0}=1
\\
\label{E:estimateonmovingcurve2}
&c_{i}=2f(b_{i}/2)=\cancel{2}\cdot 2^{-\frac{1}{b_{i}/2}\cancel{-1}}= 2^{-2^{2i+1}},
&& c_{0}=\frac{1}{4},
&& i\in\N.
\\
& \lambda_{i}=\frac{c_{i-1}}{c_{i}}=\frac{2^{-2^{2i-1}}}{2^{-2^{2i+1}}}=2^{2^{2i-1}(4-1)}=2^{3\cdot2^{2i-1}},
\\
\label{E:rybrfffr}
& d_{i}=\frac{c_{i-1}-c_{i}}{c_{i}}=\lambda_{i+1}-1=2^{3\cdot2^{2i-1}}-1 \ .
\end{align}
\end{subequations}
Notice finally that the difference $c_{i-1} - c_{i}$ is asyntotic to $c_{i-1}$:
\[
\frac{c_{i-1} - c_{i} }{c_{i-1}}=1-\frac{c_{i} }{c_{i-1}}=1-\frac{1}{1+d_{i}}=1-\lambda_{i}^{-1}
\]
and thus
\begin{equation}
\label{E:iteration}
d_{i}c_{i}= c_{i-1} - c_{i}= c_{i-1}(1-\lambda_{i}^{-1}) \ .
\end{equation}
A table of the first values is the following
\[
\begin{array}[h]{r|c|c|c|c}
&a_{i} &{b_{i}}&c_{i} &d_{i}
\\
\hline
0 & 1& -&{1}/{4}& -\\
1 &{3}/{8}&{1}/{4}&{1/256}& 63\\
2  &{5}/{32}&{1}/{16}&2^{-32}&2^{24}-1\\
\vdots&\vdots&\vdots&\vdots&\vdots
\end{array}
\]
\begin{figure}[htp!]
\input{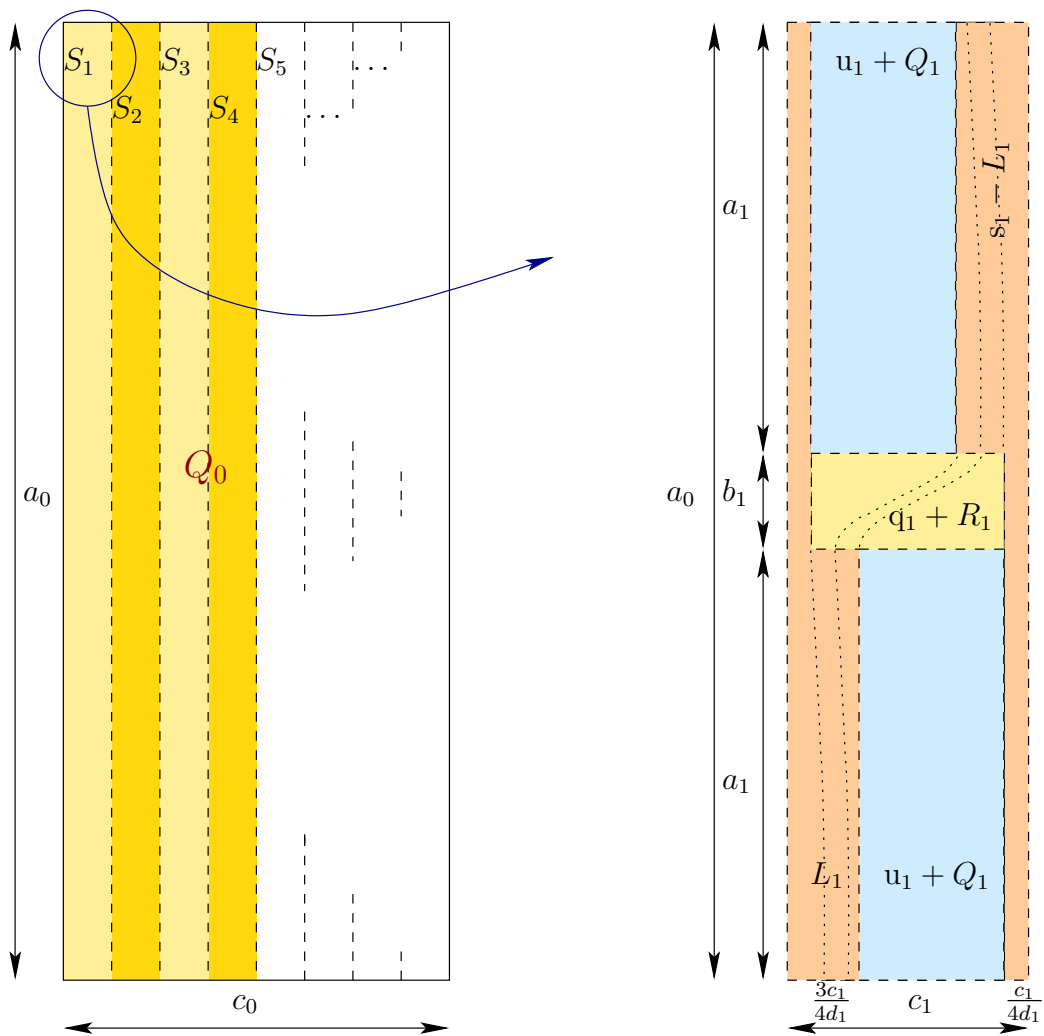}
\caption{The initial region $Q_{0}$ and one of its strips $S_{1}$. Proportions are distorted. {{}Dotted} lines suggest the qualitative behavior of characteristic curves. {}Dashed lines suggest the blocks $S_{i}$, $Q_{i}$, $L_{i}$, $R_{i}$ in the construction.}
\label{fig:esempioFig1}
\end{figure}

\step{Definition of the iterative regions (Figure~\ref{fig:esempioFig1})}
We consider the basic domain 
\[
Q_{i-1}=[0,a_{i-1}]\times[0,c_{i-1}]
\qquad i\in\N
\] 
and we partition it at the $i$-th step into $d_{i}$ vertical sub-strips which are translations of
\[
S_{i} = [0,a_{i-1}]\times\left[0, c_{i}\left(1+\frac{1}{d_{i}}\right)\right]
\]
each of size $a_{0}c_{1}(1+1/ d_{1})$: indeed when $i=1$ we have $a_{0}=1$ and
\[
d_{1}c_{1}(1+1/d_{1})=d_{1}c_{1}+c_{1}=c_{0}\ .
\]
We obtain the flowing partition of $Q_{0}$, that we write with a notation suitable for later iterations, when we will then have $i>1$:
\[
Q_{i-1}=[0,a_{i-1}]\times [0,c_{i-1}]=\bigcup_{j=0}^{ d_{i}-1} \left(j\rv_{i} +S_{i}\right)
\qquad
\rv_{i}=\left (0,\frac{c_{i-1}}{d_{i}}\right) .
\]
The vertical strip $S_{i}$ is then partitioned into three horizontal strips as follows, see Figure~\ref{fig:esempioFig1}: 
\begin{itemize}
\item The intermediate horizontal strip has size $b_{i}\times c_{i}(1+1/d_{i})$.
It is in turn made of three vertical sub-strips: 
\begin{itemize}
\item two extremal strips of size $b_{i}\times c_{i}/4d_{i}$, included in the regions $L_{i}$ of the next point, and
\item a central one strip which is the translation of a rectangle 
\[
R_{i} =[a_{i},a_{i}+b_{i}]\times \left[ 0,c_{i}\left(1+\frac{1}{2d_{i}}\right)\right] .
\]
This strip provides space for smooth junctions among vertical curves.
\end{itemize}
\item The two extremal strips have size $a_{i}\times c_{i}(1+1/d_{i})$. Each horizontal strip is in turn made of three vertical sub-strips: 
\begin{itemize}
\item a central iterative strip which is a translation of the rectangle
\[
Q_{i} = [0,a_{i}]\times\left[ 0 , c_{i}\right];
\]
\item two extremal strips which are translations and reflections of
\[
L_{i}  = [0,a_{i}]\times \left[0, \frac{3c_{i}}{4d_{i}}\right]\bigcup [0,a_{i-1}]\times\left[0, \frac{c_{i}}{4d_{i}}\right] .
\]
\end{itemize}
\end{itemize}
We can hence write the decomposition
\begin{gather*}
S_{i}=L_{i}\bigcup  \left[ \sr_{i}- L_{i} \right]\bigcup [\qr_{i}+ R_{i} ] \bigcup\left[\rr_{i}+Q_{i}\right]\bigcup \left[\ur_{i}+Q_{i} \right]
\\
\sr_{i}=\left(a_{0}, c_{1}+\frac{c_{1}}{d_{i}}\right)
\qquad
\qr_{i}=\left(0,\frac{c_{i}}{4d_{i}}\right)
\qquad
\rr_{i}=\left(0,\frac{3c_{i}}{4d_{i}}\right)
\qquad
\ur_{i}=\left(a_{i}+b_{i},\frac{c_{i}}{4d_{i}}\right)
.
\end{gather*}
We term $Q_{j_{1}\dots j_{i}}^{h_{1}\dots h_{i}}$, for  $h_{\ell}\in\{0,1\}$ and $j_{\ell}\in\{0,\dots,d_{\ell}-1\}$ when $\ell=1,\dots, i$, the rectangles that are generated at the $i$-th step: with the above notation one can write
\[
Q_{j_{1}\dots j_{i}}^{h_{1}\dots h_{i}}=\sum_{\ell=1}^{i}j_{\ell}\rv_{\ell} +\sum_{\ell=1}^{i} h_{j_{\ell}} \ur_{\ell} +  \sum_{\ell=1}^{i} (1-h_{j_{\ell}})\rr_{\ell} +Q_{i}.
\]
Each of this rectangles $Q_{j_{1}\dots j_{i}}^{h_{1}\dots h_{i}}$ will be further partitioned at the next step.
The remaining regions of shape $L_{i}, -L_{i}$ and $R_{i}$, suitably translated, are not partitioned any more.

\step{Measure of the Cantor set}
We compute the measure of the set 
\[
K=\bigcap_{i\in\N}\bigcup_{j_{1}=0}^{d_{1}-1}\dots \bigcup_{j_{i}=0}^{d_{i}-1}\bigcup_{h_{1},\dots , h_{i}=0}^{1}Q_{j_{1}\dots j_{i}}^{h_{1}\dots h_{i}}
\]
As each $Q^{h_{1}\dots h_{i-1}}_{j_{1}\dots j_{i-1}}$ generates $2d_{i}$ rectangles each of size $c_{i}a_{i}$, then at the $i$-th step
\begin{align*}
\Ll^{2}\left(\bigcup_{j_{1}=0}^{d_{1}-1}\dots \bigcup_{j_{i}=0}^{d_{i}-1}\bigcup_{h_{1},\dots , h_{i}=0}^{1}Q_{j_{1}\dots j_{i}}^{h_{1}\dots h_{i}}\right)
&= 
2^{i} d_{1}\cdots d_{i-1} d_{i}c_{i} a_{i} 
\\
&\stackrel{\eqref{E:iteration}}{=} 
2^{i}d_{1}\dots d_{i-1}[c_{i-1}(1-\lambda_{i}^{-1})]a_{i}
\\
&\stackrel{\eqref{E:iteration}}{=} 
2^{i}a_{i} c_{0}\prod_{j=0}^{i}(1-\lambda_{j}^{-1})
\end{align*}
As the series $\sum_{j=0}^{\infty} \lambda_{j}^{-1}$ converges, by the elementary estimate $\prod_{j=1}^{i}(1-\lambda_{j}^{-1})\leq \exp(-\sum_{j=1}^{i} \lambda_{j}^{-1})$ the infinite product also converges and in the limit we get
\begin{align}
\label{E:misuraK}
\Ll^{2}(K)&=\lim _{i\to\infty} \cancel{2^{i}}\cdot 2^{\cancel{-i}-1}(1+2^{-i}) \cdot \prod_{j=1}^{i}\left(1-2^{-3\cdot 2^{2i-1}}\right) \cdot 2^{-2} 
\\\notag
&=  \frac{1 }{8}\prod_{j=1}^{\infty}\left(1-2^{-3\cdot 2^{2i-1}}\right)  
\\\notag
&>  \frac{1 }{8}\prod_{j=1}^{\infty}\left(1-2^{-3\cdot 2^{2i-1}}\right)^{2^{3\cdot 2^{2i-1}}} =\frac{1 }{8e} \ .
\end{align}
In particular $K$ is non-negligible.

\step{Assigning $u$ and characteristic curves}
We subdivided $Q_{0}$ into different regions in order to facilitate the definition of the characteristic curves. 
We care now of defining
\[
u\in C^{1}(Q_{0}\setminus K) \cap C(Q_{0}).
\]
It will vanish on $K$ by continuity. We define simultaneously characteristic curves in $Q_{0}\setminus K$.
They will be defined separately in the different regions, and they will have smooth junctions.

Up to translations, focus on the fundamental regions $R_{i}$, $L_{i}$, $-L_{i}$.
We first specify the following common properties that characteristic curves should satisfy in a region $R\in\{R_{i}, L_{i}, -L_{i}\}$:
\begin{itemize}
\item Characteristic curves do not intersect.
\item Characteristic curves through points $(t,x)$ with $t\in\{0;a_{i};a_{i}+b_{i};a_{i-1}\}$ have there vertical tangent. 
This means that $u$ vanishes on those horizontal lines.
We impose moreover that at those points also the derivative of $u$ along the characteristic vanishes.
\item The image of the curves cover the whole region $R$, defining a $C^{1}(R)$ function
\[
u(t,x)\doteq \left(f'\right)^{-1}(\dot \gamma(t))
\quad\text{where $\gamma$ is the characteristic defined through $(t,x)\in R$}.
\]
There is no ambiguity in this structure due to the strict, even if not uniform, convexity of $f$.
\end{itemize}
We describe now the shape of the curves, depending on the region $R\in\{R_{i},L_{i},-L_{i}\}$.
The dashed lines in the RHS of Figure~\ref{fig:esempioFig1} give an idea of this qualitative behavior.
The precise expression of the characteristic curves that we describe can be computed by elementary auxiliary computations that we report for completeness in \S~\ref{Ss:auxcomputations}.

\firstsubstep
\substep{Region $R_{i}$ (Figure~\ref{fig:esempioFig1})}
The unique characteristic curve through a point in 
\begin{align*}
\begin{array}{l}
\{a_{i}\}\times [0,\frac{c_{i}}{4d_{i}}]\\
\{a_{i}\}\times[\frac{c_{i}}{4d_{i}},\frac{c_{i}}{2d_{i}}]\\
\{a_{i}\}\times[\frac{c_{i}}{2d_{i}},c_{i}+\frac{c_{i}}{2d_{i}}]
\end{array} 
\text{ reaches increasingly a point in } 
\begin{array}{l}
\{a_{i}+b_{i}\}\times[0,c_{i}]\\
\{a_{i}+b_{i}\}\times[c_{i},c_{i}+\frac{c_{i}}{4d_{i}}]\\
\{a_{i}+b_{i}\}\times[c_{i}+\frac{c_{i}}{4d_{i}},c_{i}+\frac{c_{i}}{2d_{i}}] .
\end{array} 
\end{align*}
This is compatible with the previous common requirements by the choice~\eqref{E:estimateonmovingcurve2}: an explicit construction of a curve joining $(a_{i}, \lambda \frac{c_{i}}{4d_{i}})$ and $(a_{i}+b_{i}, \lambda c_{i})$, for $\lambda\in[0,1]$, is provided by Example~\ref{Ex:raccordo}.

\substep{Region $L_{i}$ (Figure~\ref{fig:esempioFig1})}
The unique characteristic curve through a point in 
\begin{align*}
\begin{array}{lll}
\{0\}\times [0,e_{i}] &\text{ reaches decreasingly a point in }& \{a_{i}\}\times [0,\frac{c_{i}}{4d_{i}}]\\
\{0\}\times[e_{i},e_{i}+\frac{c_{i}}{4d_{i}}] &\text{ reaches decreasingly a point in }&\{a_{i}\}\times[\frac{c_{i}}{4d_{i}},\frac{c_{i}}{2d_{i}}]\\
\{0\}\times[e_{i}+\frac{c_{i}}{4d_{i}}, \frac{3c_{i}}{4d_{i}}] &\text{ reaches decreasingly a point in }&\{a_{i}\}\times[\frac{c_{i}}{2d_{i}},\frac{3c_{i}}{4d_{i}}]\\
\{a_{i}\}\times [0,\frac{c_{i}}{4d_{i}}] &\text{ remains constant up to }&\{a_{i-1}\}\times [0,\frac{c_{i}}{4d_{i}}]
\end{array} 
\end{align*}
for a value of $e_{i}$ that we specify now.
We are in a situation completely analogous to Example~\ref{Ex:raccordo}. 
We show it in the most interesting region, which is the second one, and we define together $e_{i}$.
Consider the characteristic $\gamma(t)=\gamma(t;a_{i}/8)$ given by~\eqref{E:curvagiunzione}, but substituting $f(-z)$ to $f(z)$: this corresponds to the fact that $u$ is decreasing along the curve instead of increasing, and thus $u$ is negative---which in turn corresponds to $\gamma$ decreasing.
Set then
\[
e_{i}=\frac{c_{i}}{4d_{i}}-[\gamma(a_{i}) -\gamma(0)]> \frac{c_{i}}{4d_{i}}.
\]
Notice that the intervals above are well defined, because $e_{i}<\frac{c_{i}}{2d_{i}}$:
\begin{align*}
|\gamma(a_{i})-\gamma(0)| &< f(-a_{i}/2) = 2^{-\frac{1}{(a_{i}/2)^{3}}-1}=2^{-\frac{2^{3i+6}}{(1+2^{-i})^{3}}-1} 
\\
&<2^{-2^{3i+3}}<2^{-2^{2i+3}}\\
&< \frac{ 2^{-2^{2i+1}}}{4(2^{3\cdot2^{2i-1}}-1)}=\frac{c_{i}}{4d_{i}}.
\end{align*}
We observe finally that $ u$ decreases along $\gamma(t)$ up to $t=a_{i}/2$ and its derivative is $-1$ for $t$ in $[a_{i}/8,3a_{i}/8]$: then the minimum value of $u$, reached at the centre of the interval, is
\begin{align}
\label{E:growthu}
u(\gamma(a_{i}/2))=-\frac{3a_{i}}{8}<- \frac{a_{i}}{4}.
\end{align}

\substep{Region $-L_{i}$ (Figure~\ref{fig:esempioFig1})}
The region $-L_{i}$ is entirely similar to the region $L_{i}$ already described, therefore we will be quick. We require that the unique characteristic curve through a point in 
\begin{align*}
\begin{array}{lll}
\{-a_{i}\}\times [-\frac{3c_{i}}{4d_{i}},-\frac{c_{i}}{2d_{i}}] &\text{ reaches decreasingly a point in }& \{0\}\times [-\frac{3c_{i}}{4d_{i}},-e_{i}-\frac{c_{i}}{4d_{i}}]\\
\{-a_{i}\}\times [-\frac{c_{i}}{2d_{i}},-\frac{c_{i}}{4d_{i}}] &\text{ reaches decreasingly a point in }&\{0\}\times[-e_{i}-\frac{c_{i}}{4d_{i}}, -e_{i}]\\
\{-a_{i}\}\times [-\frac{c_{i}}{4d_{i}},0]&\text{ reaches decreasingly a point in }&\{0\}\times[ -e_{i},0]\\
\{-a_{i-1}\}\times [-\frac{c_{i}}{4d_{i}},0] &\text{ remains constant up to }&\{-a_{i}\}\times [-\frac{c_{i}}{4d_{i}},0]
\end{array} 
\end{align*}
for the value of $e_{i}$ already defined.
Along characteristic curves passing through $\{-a_{i}\}\times [-\frac{c_{i}}{2d_{i}},-\frac{c_{i}}{4d_{i}}] $ the function $u$ reaches a minimum value which is less than $-a_{i}/4$.

\step{ $u$ on $K$ is not differentiable along characteristics} 
We remind that $K$ has positive measure by~\eqref{E:misuraK}.
We check now that $K$ is made of non-differentiability points of $u$ along characteristics.
Consider a point $(t,x)\in K$: it is the countable intersection of countable unions of rectangles, and the $2^i d_{1}\cdots d_{i}$ rectangles in the $i$-th union are translations of $Q_{i}$. With the notation above, one can write
\[
\forall i\in\N\ , \ \ell=1,\dots,d_{i-1}\quad \exists h_{\ell}^i\in\{0,1\},\ j_{\ell}^i\in\{0,\dots,d_{\ell}-1\} \quad:\quad (t,x)\in \bigcap_{i=1}^{\infty} Q_{j_{1}^i\dots j_{i}^i}^{h_{1}^i\dots h_{i}^i}.
\]
For simplicity of notation focus on $j_{\ell}^i=0$, $h_{\ell}^i=1$ for every $i,\ell\in\N$, $\ell\leq i$, which means
\[
(t,x)=\left(a_{0},\ \sum_{i\in\N}\frac{c_{i}}{4d_{i}}\right)\ .
\]
The general case is entirely analogous. Denote by $\gamma(t)$ a characteristic curve through $(t,x)$.
We show the non-differentiability by proving the following:
\begin{enumerate}
\item  There is a sequence of points $\{t^{0}_{i}\}_{i\in\N}$ converging to $t$ such that $u(t^{0}_{i},\gamma(t^{0}_{i}))=0$.
\item There is a sequence of points $\{t^{-}_{i}\}_{i\in\N}$ such that
\begin{equation}
\label{E:tempibrutti}
|t^{-}_{i}- t|=\frac{a_{i}}{2} \leq a_{i},\qquad u(t^{-}_{i},\gamma(t^{-}_{i}))=-\frac{3a_{i}}{8}< -\frac{a_{i}}{4}.
\end{equation}
This implies that $u\circ \gamma$ cannot have zero derivative at $t$: since $u(t,x)=0$ we get
\[
\liminf_{i\to\infty } \left|\frac{u(t,x)-u( t^{-}_{i}, \gamma(t^{-}_{i}))}{t-t^{-}_{i}}\right|
>
\liminf_{i\to\infty } \frac{a_{i}/4}{a_{i}}=\frac{1}{4}.
\]
\end{enumerate}
The two points together imply that $u\circ\gamma$ cannot be differentiable at $t$, because along the two different sequences $\{t^{-}_{i}\}_{i\in\N}$  and $\{t^{0}_{i}\}_{i\in\N}$  the different quotients have two different limits: respectively $0$ and something less than or equal to $-\frac{1}{4}$.
The two sequences are defined as follows. 
\begin{enumerate}
\item By construction (Figure~\ref{fig:esempioFig1}) $\gamma$ intersects each lower side of the rectangles $Q^{1}_{0}$, $Q^{11}_{00}$, $Q^{111}_{000}$, \dots at times $a_{0}-a_{1}$, $a_{0}-a_{2}$, \dots.
 On that side we set $u$ vanishing: then $u$ vanishes on the sequence of times $t^{0}_{i}=a_{0}-a_{i}$, $i\in\N$, which converges to $t=a_{0}$. 
\item Characteristic curves were conveyed to the lower side of $Q^{1\dots 1}_{0 \dots 0}$, which is say a translation of $Q_{i}$, from a specific part of the region
\[
\left(\sum_{j=1}^{i-1}(a_{j}+b_{j}), \ \sum_{j=1}^{i-1} \frac{c_{j}}{4d_{j}} \right)+ L_{i}.
\]
There are times inside this region translated of $L_{i}$ where our requirement is satisfied, like
\[
t^{-}_{i}\doteq \sum_{j=1}^{i-1}(a_{j}+b_{j})+\frac{a_{i}}{2}=a_{0}-b_{i}-\frac{3a_{i}}{2} 
\] 
works by~\eqref{E:growthu}.
Notice that $|t-t^{-}_{i}|=  b_{i}+3a_{i}/2<a_{i-1}\to 0$.
\end{enumerate}
This concludes proof that $u$ at any point of $K$ is not differentiable along characteristics, and hence this concludes the example.

\begin{remark}\label{R:noHolder}
We notice that the function $u$ constructed in the present section is not H\"older continuous: in the same setting where~\eqref{E:tempibrutti} was derived one has
\[
u(t_{i}^{-},\gamma(t_{i}^{-}))=-\frac{3a_{i}}{8}\stackrel{\eqref{E:ai} }{=}-\frac{3 }{8}\cdot 2^{-i-1}(1+2^{-i}) \qquad \sim \qquad-3\cdot 2^{-i-4} \ .
\]
Moreover, $u$ vanishes on the left side of each $L_{i}$ which contains part of $\gamma$, so that if we denote by $x^{*}_{i}$ the intersection of that left side with the fixed time $t=t_{i}^{-}$, namely $x^{*}=\sum_{j=1}^{i-1} \frac{c_{j}}{4d_{j}}$, then
\[
0<
\gamma(t_{i}^{-}) - x^{*}_{i}
<
\frac{3 }{4}\frac{c_{i}}{d_{i}}
\stackrel{\eqref{E:estimateonmovingcurve2}-\eqref{E:rybrfffr}}{=}
\frac{3 }{4}\frac{ 2^{-2^{2(i+1)}}}{2^{3\cdot2^{2i-1}}-1 }
 \qquad \sim \qquad
\frac{3 }{4}\cdot 2^{ - {11}\cdot2^{2i-1}} \ .
 \]
We thus conclude that for every constant $\alpha>0$
\[
\lim_{i}\frac{|u(t_{i}^{-},\gamma(t_{i}^{-}))-u(t_{i}^{-},x^{*})|}{|\gamma(t_{i}^{-}) - x^{*}_{i}|^{\alpha}}
=
\lim_{i}\frac{\cancel3\cdot 2^{-i-2}}{\cancel3\cdot 2^{ - {11\alpha}\cdot2^{2i-1} }}
=+\infty \ .
\]
\end{remark}

\section{Failure of Lipschitz continuity along characteristics}
\label{Ss:Notlipanlongchar}

A continuous distributional solution $u$ to
\begin{equation*}
\tag{\ref{EE}}
\pt u(t,x) + \px (f(u(t,x))) =  \gfr(t,x)
\qquad f\in C^{2}(\R),
\qquad 
|\gfr(t,x)|\leq G
\end{equation*}
is always Lipschitz continuous along characteristics if 
\begin{align}
\tag{\ref{ass:h}}
\Ll^{1}(\clos({\infl(f)}))=0
\end{align}
is satisfied \cite[Theorem~30]{file1ABC}. Our first aim is to show now that this assumption is needed: we provide a flux function $f$ with non-negligible inflection points and a continuous (Lagrangian and Eulerian) solution  $u(t,x)\equiv u(x)$ which is not Lipschitz continuous when restricted to \emph{some} characteristic curves. The notion of Broad solution does not make sense for such fluxes.
Moreover, changing the Lagrangian parameterization, the Lagrangian source $\gfr$ might change.

Example~\ref{Ex:Lagrnobroad} shows that for some Lagrangian parameterization it does not make sense defining a Lagrangian source, since Lipschitz continuity of $u$ on its characteristics might fail. It also proves that, whenever one chooses a `good' Lagrangian parameterization, the Lagrangian source should be fixed accordingly: no universal choice is possible even within admissible Lagrangian parameterizations. This is even more astonishing considering that the Eulerian source term is constantly $1$ and $\pt u=0$.

\begin{remark}
\label{Rem:1}
We first remind that there might be continuous solutions to~\eqref{EE}, even in the autonomous case $u(t,x)\equiv u(x)$, which are Cantor-like functions.
Consider a flux function $f\in C^{2}(\R)$ which is strictly increasing and which satisfies
\[
S\doteq \{z \ : \ f'(z)=0\}\subset[0,1]\  ,
\qquad \Ll^{1}(S)>0\ ,
\quad S\text{ does not contain intervals}
.
\]
Notice that $f''(z)=0$ at all points $z\in S$ and all points of $S$ are inflection points of $f$, so that condition~\eqref{ass:h} is violated. Consider the function
\[
w(z)=z-\Ll^{1}(\{q<z\ : \ f'(q)=0\}) \ .
\]
First observe that $w$ is strictly increasing and $1$-Lipschitz continuous:
\[
z_{1}< z_{2}\qquad\Rightarrow \qquad 0<w(z_{2})-w(z_{1})=z_{2}-z_{1}-\Ll^{1}([z_{1},z_{2}]\cap S)\leq z_{2}-z_{1} 
\]
 since $(z_{1},z_{2})\setminus S $ is a non-empty and open subset of $[z_{1},z_{2}]$. Being $ w'(z)=0$ and $f'(z)=0$ when $z$ is a Lebesgue point of $S$, then by the area formula
\begin{equation}
\label{E:grgrgrgwg}
\Ll^{1}(w(S))=0 \ ,
\qquad
\Ll^{1}(f(S))=0 \ .
\end{equation}
This vanishing condition implies that the (continuous, strictly increasing) inverse $w^{-1}$ of $w$ is a Cantor-Vitali like function, since $w^{-1}$ maps the $\Ll^{1}$-negligible set $w(S)$ to the non $\Ll^{1}$-negligible set $S$. Set
\[
u(t,x)=w^{-1}(x) \ .
\]
Notice that $u$ defines a continuous function which is constant in $t$ and strictly increasing in $x$. The composition $f\circ u(t,x)=\int_0^x(f'(w^{-1}))$ is Lipschitz continuous and $u$ is a distributional solution of
\[
\pt u(t,x) + \px (f(u(t,x))) = \gfr(t,x)\qquad\text{with $\gfr(t,x)= f(w^{-1})'(x)$} .
\]
\end{remark}
\nomenclature{$\uno_S$}{If $S$ is a set, $\uno_S$ is the function valued $1$ on $S$ and $0$ elsewhere}

\begin{example}[A distributional solution is not necessarily broad]
\label{Ex:Lagrnobroad}
Consider the same flux as in Remark~\ref{Rem:1}, where $f$ is the inverse of a Cantor-Vitali-like function.

Define the continuous function
\begin{equation}
\label{E:fnkurgbgrrg}
u(t,x) = f^{-1}(x),
\end{equation}
which is a distributional solution of the equation with continuous Eulerian source
\begin{equation}\label{E:PDEbanale}
\pt u +\px f(u)\equiv \px x=1.
\end{equation}
We now show that $u$ is not a broad solution, because it is not Lipschitz continuous on every characteristic curve.
We then compute that it is indeed a Lagrangian solution, as it must be~\cite[Corollary~46]{file1ABC}.

\firststep

\step{$u$ is not a broad solution} Consider the increasing and $1$-Lipschitz continuous function
\begin{align*}
w&&:&&z && \mapsto && z-\Ll^{1}(S\cap [0,z]).&&
\end{align*}
The derivative of $w$ at the density points of $S$ is $0$, while it is $1$ at the Lebesgue points of the complement.
The set $S$ is mapped into a $\Ll^{1}$-negligible set on which the singular part of $\partial_{z}w$ is concentrated: in particular, $w^{-1}$ is not absolutely continuous.
The following curve is well defined, because $w$ is a bijection:
\begin{equation}
\label{E:buttare}
\gamma(t)=f(w^{-1}(t)).
\end{equation}
Form the definition of $S$, $f'$ vanishes on it. As a consequence of~\eqref{E:grgrgrgwg} the characteristic $\gamma$ is Lipschitz continuous because the composition of $f$ and $w^{-1}$ is absolutely continuous, and
\[
\dot\gamma(t)=f'(w^{-1}(t))\stackrel{}{{=}}f'({{}f^{-1}}({{}f(w^{-1}(t)))})\stackrel{\eqref{E:fnkurgbgrrg}-\eqref{E:buttare}}{=}f'({{}u}(t,{{}\gamma(t)})).
\]
Nevertheless, $u$ is not absolutely continuous on this characteristic $\gamma$:
\[
 {{}u}(t,{{}\gamma(t)}) \stackrel{\eqref{E:fnkurgbgrrg}}{{=}}{{}f^{-1}}({{}\gamma(t)}) \stackrel{\eqref{E:buttare}}{{=}}{{} f^{-1}}({{}f(w^{-1}(t))}) = w^{-1}(t).
\]

\step{$u$ is a Lagrangian solution} For each $\tau\in \R$ the function
\begin{subequations}
\label{E:lspaorejff}
\begin{equation}
\chi(t,\tau)= f(t+\tau)
\end{equation}
is a characteristic:  by~\eqref{E:fnkurgbgrrg} and by definition of the Lagrangian parameterization
\[
{{}u}(t,{{}\chi(t,\tau)})={{}f^{-1}}({{}f(t+\tau)})=t+\tau
\]
and thus
\[
\pt \chi(t,\tau) = f'(t+\tau)=f'(u(\chi(t,\tau))) \ .
\]
Since $\chi$ is smooth and also monotone in $\tau$, $\chi$ is a Lagrangian parameterization associated with $u$.
Notice that the Lagrangian source coincides with the natural representative of the distributional one:
\begin{equation}
\ddt u(\chi(t,\tau))=\ddt(t+\tau)=1
\qquad
\forall t>0,\tau\in\R.
\end{equation}
\end{subequations}
This yields to the natural definition $\gfr(t,x)=1$ for $(t,x)\in\R^{+}\times\R$.

\step{$u$ admits incompatible Lagrangian sources}
On the compact, $\Ll^{1}$-negligible set of $x$ defined by $f(S)$ one has also the vertical characteristics.
One can clearly define a Lagrangian parameterization $\widetilde\chi$ which includes such characteristics. Being $u$ identically zero on these, then it would be necessary to define pointwise the source term
\[
\widetilde\gfr(t,x)=0
\qquad \forall (t,x)\in \R\times f(S).
\]
This Borel regular function is different from the source of the Lagrangian parameterization~\eqref{E:lspaorejff} and different from the continuous representative of the Eulerian source~\eqref{E:PDEbanale}.
However, being $\Ll^{1}(f(S))=0$, the functions $\gfr$ and $\widetilde\gfr$ differ on an $\Ll^{2}$-negligible set. The Lagrangian sources of the two Lagrangian parameterizations identify the same distributional source:
\[
 \widetilde\gfr = \gfr
 \qquad \Ll^2\text{-a.e.}.
\]
Attaining different values on the vertical characteristics $\{(t,\tau)\}$ for $\tau\in f(S)$, $\gfr$ fails to be a source associated to the Lagrangian parameterization $\widetilde\chi$ as well as $\widetilde \gfr$ fails to be a source associated to the Lagrangian parameterization $\chi$.
Indeed, there are positive measure times $t\in ( S-\tau)$ where the sources differ on the characteristics~\eqref{E:lspaorejff}:
$1=\gfr(t,\chi(t,\tau))\neq \gfr(t,\chi(t,\tau))=0$.

We conclude observing that Lagrangian sources relative to $\chi$ and to $\widetilde \chi$ are not compatible: there exists no Borel, bounded function $\gfr$ such that
\begin{itemize}
\item $\gfr\circ i_{\chi}=1$ $\Ll^2$-a.e. and 
\item which satisfies $\gfr(t,x)=0$ for $\Ll^{1}$-a.e.~$t$ if $x\in f(S)$.
\end{itemize}

\end{example}

\section{Auxiliary computations}
\label{Ss:auxcomputations}
We collect here elementary computations which exhibit junctions, with a generic flux $f$, among characteristics defined in separate regions.
Each characteristic $\gamma$ must satisfy \[\dot \gamma(t)=f'(u({(t,\gamma(t))}))\] by definition.
In particular, if one prescribes $u({(t,\gamma(t))})$ smooth enough then $\gamma$ is determined, up to translations, simply by integrating in time the prescriber function $f'(u({(t,\gamma(t))}))$.
We see below some examples: $u$ growing linearly, quadratically, and a combination of the two.
\begin{example}
\label{Ex:uffa}
Let us start with a trivial example.
Consider a characteristic $\gamma$ such that 
\[
\ddt u(t,\gamma(t))=1 \text{ for }t\in(a,b). 
\]
In this case, the characteristic $\gamma$ can be easily computed. Indeed, one has
\[
u(t,\gamma(t))= u_{a}+t-a,
\text{ where $u_{a}\doteq u(a,\gamma(a))$,}
\]
and therefore for $t\in[a,b]$
\begin{align*}
\gamma(t)-\gamma(a)&=\int_{a}^{t}\dot\gamma(s)ds=\int_{a}^{t}f'(u(s,\gamma(s)))ds
=\int_{a}^{t}f'(u_{a}+s-a)ds
\\
&=f(u_{a}+t-a)- f(u_{a}).
\end{align*}
If more generally 
\[
\ddt u(t,\gamma(t))= v(t)>0 \text{ for }t\in(a,b), 
\]
then \[t\mapsto u(t,\gamma(t))=u_{a}+\int_{a}^{t}v(s)ds, \qquad u_{a}=u(a,\gamma(a))\] is invertible with inverse that we denote $U^{-1}$.
Even if the expression of the characteristic $\gamma$ is not as explicit as before, one can compute the variation of $\gamma(t)$ from time $a$ to time $t$ by \begin{align}
\label{E:formulastupida}
\gamma(t)-\gamma(a)&=\int_{a}^{t}\dot\gamma(s)ds=\int_{a}^{t}f'(u(s,\gamma(s)))ds = \int_{a}^{t}f'\left(u_{a}+\int_{a}^{s}v(r)dr\right)ds
\\
&
\stackrel{z=u(s,\gamma(s))}{=}\int_{u_{a}}^{u_{t}}\frac{f'(z)}{ v(U^{-1}(z))}dz \notag
,
\end{align}
where we termed \[u_{a}\doteq u(a,\gamma(a)), \qquad u_{t}\doteq u(t,\gamma(t))=u_{a}+\int_{a}^{t}v(s)ds.\]
\end{example}

\begin{example}
\label{Ex:riuffa}
Let us consider another easy example where the trace of $u$ on a characteristic $\gamma$ determines the characteristic $\gamma$ itself.
If 
\[
u({(t,\gamma(t))})=\frac{t^{2}}{2\tau}\text{ for } t\in(0,\tau)
\quad \Rightarrow\quad
v(t)\doteq  \ddt u({(t,\gamma(t))}=\frac{t}{\tau},\ v(U^{-1}(z))=\sqrt{2z/\tau}
\]
and equation~\eqref{E:formulastupida} gives us
\begin{align*}
\gamma(\tau)-\gamma(0)&=\int_{0}^{\tau/2}\frac{f'(z)}{ \sqrt{2z/\tau}}dz,
\qquad
\ddt u(0,{\gamma}(0))=0,\quad \ddt u(\tau, {\gamma}(\tau))=1
.
\end{align*}
Similarly, if one fixes
\[
u({(t,\gamma(t))})=C-\frac{(t-b)^{2}}{2\tau}
\qquad\text{for $t\in(b-\tau,b)$},
\]
then
\[
v(t)=(b-t)/\tau
\qquad\text{and}\qquad
v(U^{-1}(z))=-\sqrt{2(C-z)/\tau} \ .
\]
\end{example}

\begin{example}
\label{Ex:raccordo}
Suppose one requires that $0\leq\ddt u({(t,\gamma(t))})\leq 1$ for $t\in[0,b]$ and
\begin{align}
\label{E:curvamodello}
\gamma(b)-\gamma(0) = c,
\qquad
\ddt u(t, {\gamma}(t))\Big|_{t=0}=u(0,{\gamma}(0)) =0=u(b,{\gamma}(b))=\ddt u(t,{\gamma}(t))\Big|_{t=b}.
\end{align}
We describe the characteristic in the interval $[0,b/{2}]$, then we take the symmetric
\begin{subequations}
\label{E:curvagiunzione}
\begin{gather}
\gamma(t) = \gamma(b/2)+\int_{b-t}^{b/2}\dot\gamma(s)ds \qquad t\in (b/2,b].
\end{gather}
The (positive) values of $c$ one can hope to achieve are less than $2f(b/2)$ since
\begin{align*}
\gamma(b/2)-\gamma(0)&=\int_{0}^{b/2}\dot\gamma(s)ds=\int_{0}^{b/2}f'(u(s,\gamma(s)))ds
\stackrel{0\leq u(s,\gamma(s)) <s}{<}\int_{0}^{b/2}f'(s)ds  = f(b/2)
.
\end{align*}
We give below $C^{2}$-curves achieving each precise value $0<c<2f(b/2)$, distinguishing $c$ small or big.

\emph{Case $c\sim 2f(b/2)$.}
Combine Examples~\ref{Ex:uffa} and~\ref{Ex:riuffa}: the continuous function
\begin{align}
\gamma(t;\tau)=
\begin{cases}
\gamma(0)+\int_{0}^{t^{2}/(2\tau)}\frac{f'(z)}{ \sqrt{2z/\tau}}dz
& 0\leq t\leq \tau
\\
\gamma(\tau)+f(\tau/2+t-\tau)- f(\tau/2)
& \tau< t\leq b/2-\tau
\\
\gamma(b/2-\tau)+ \int_{0}^{b/2 -\tau-(t-b/2)^{2}/(2\tau)}f'(z)\sqrt{\frac{\tau}{b-2\tau -2z}}dz
& b/2-\tau< t\leq b/2
\end{cases}
\end{align}
\end{subequations}
will satisfy the requirements in~\eqref{E:curvamodello} for one fixed $\tau\in (0,b/4]$, provided that
\[
f\left(\frac{b}{2}\right)>\frac{c}{2} \geq \gamma\left(\frac{b}{2};\frac{b}{4}\right)-\gamma(0)=2\int_{0}^{b/2}\frac{f'(z)}{ 2\sqrt{2z/b}}dz .
\]
This choice is equivalent to assigning the $C^{1}([0,b/2])$ function
\[
u(i_{\gamma_{\tau}}(t))
=
\begin{cases}
\frac{t^{2}}{2\tau}
& 0\leq t\leq \tau \ ,
\\
\frac{\tau}{2 }+t-\tau\equiv t-\frac{\tau}{2}
& \tau< t\leq \frac{b}{2}-\tau \ ,
\\
\frac{b}{2}-\tau-\frac{(t-\frac{b}{2})^{2}}{2\tau}
& \frac{b}{2}-\tau< t\leq \frac{b}{2} \ ,
\end{cases}
\]\[
\ddt u(i_{\gamma_{\tau}}(t))
=
\begin{cases}
\frac{t }{ \tau}
& 0\leq t\leq \tau \ ,
\\
1
& \tau< t\leq \frac{b}{2}-\tau \ ,
\\
 \frac{b}{2\tau}- \frac{  t }{\tau }
& \frac{b}{2}-\tau< t\leq \frac{b}{2} \ .
\end{cases}
\]

\emph{Case $c$ small.} If instead $c$ is small one may have
\[
f\left(\frac{b}{2}\right) > 2\int_{0}^{b/2}\frac{f'(z)}{ 2\sqrt{2z/b}}dz>  \frac{c}{2} \ .
\]
In this case one can just consider, for the suitable $\tau\in [0,2/b]$, the choice of the $C^{1}([0,b/2])$ function
\[
u(i_{\gamma_{\tau}}(t))
=
\begin{cases}
\tau t^{2} 
&  0\leq t\leq \frac{b}{4} \ ,
\\
\frac{\tau b^{2}}{8}-\tau \left(t-\frac{b}{2}\right)^{2} 
&  \frac{b}{4} <t\leq \frac{b}{2} \ ,
\end{cases}
\]\[
\ddt u(i_{\gamma_{\tau}}(t))
=
\begin{cases}
2\tau t  
&  0\leq t\leq \frac{b}{4} \ ,
\\
(b-2t)    \tau 
&  \frac{b}{4} <t\leq \frac{b}{2} \ .
\end{cases}\]
This defines the characteristic, for example in the first half interval $[0,b/4]$,
\[
\gamma(t;\tau)
=
\gamma(0)+\int_{0}^{\tau t^{2}}\frac{f'(z)}{ 2\sqrt{\tau z}}dz
\qquad
 0\leq t\leq b/4
.\]
Since $\gamma(b/2;\tau)\downarrow \gamma(0)$ as $\tau\downarrow 0$ and $\gamma(b/2;1/b)>c/2$, by the continuity there is indeed a suitable $\tau$.
\end{example}

\nomenclature{$\Haus^{1}$, $\Haus^{2}$}{{}1- or 2-dimensional Hausdorff measure}
\nomenclature{$\Ll^{1}$, $\Ll^{2}$}{1- or 2-dimensional Lebesgue measure}
\nomenclature{$u$, $f$}{$u$ is a fixed continuous solution for the balance law~\eqref{EE} and $f$ is the $C^2$-flux}
\nomenclature{$\lambda(t,x)$}{The composite function $f'(u(t,x))$}
\nomenclature{$\pt,\px$}{Partial derivatives in the sense of distributions}
\nomenclature{$\ddt$}{Classical derivative in the real variable $t$}
\nomenclature{$X$}{Subset either of $\R^{2}$ or of $\R$, usually Borel.}
\nomenclature{$\Omega$}{Open subset of either $\R^2$ or $\R$, if needed connected}

\section{A remark on continuous sources}
\label{S:cotinuoussources}

We emphasize the nontrivial fact that for continuous sources the continuous representative is both a \emph{particular} Lagrangian source and a good Eulerian source, if $f$ has negligible inflection points.
While for $\alpha$-convex fluxes of~\cite{estratto} a continuous Eulerian source is also a Broad source, this fails when $f(u)=u^{3}$, see Remark~\ref{Rem:cubico}.
\begin{theorem}
\label{T:continousSourceGen}
If $u,\gfr\in C(\R^2)$ and $f\in C^2(\R)$, consider the following conditions:
\begin{enumerate} 
\item \label{item:primo} The distribution $\pt u(t,x) + \px (f(u(t,x))) $ identifies the continuous Borel function $\gfr$.
\item \label{item:secondo} There exists a family of characteristic curves dense in $\R^2$ along which $u$ is Lipschitz continuous and the classical derivative of $u$ identifies the continuous Borel function $\gfr$.
\end{enumerate}
Then~\eqref{item:secondo}$\Rightarrow$\eqref{item:primo} always. If inflection points of $f$ are {{}isolated}, then~\eqref{item:primo}$\Rightarrow$\eqref{item:secondo}.
\end{theorem}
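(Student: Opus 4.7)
The plan is to derive both implications from the compatibility machinery built in Section~\ref{S:compatibilitysources}, comparing the continuous candidate $\gfr$ against the $L^\infty$ source objects already available. The key observation is that continuity of $\gfr$ makes the spatial Lebesgue conditions appearing in the proof of Lemma~\ref{L:setdifferent} trivially satisfied at \emph{every} point.

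First I would prove \eqref{item:secondo}$\Rightarrow$\eqref{item:primo}. The equivalence (iii)$\Leftrightarrow$(i) on page~\pageref{itemize:sols} immediately provides an Eulerian source $\gfr_E\in L^\infty(\R^2)$; the task is to identify $\gfr_E=\gfr$ in $\D'(\R^2)$, equivalently $\Ll^2$-a.e. I would reproduce the Dafermos integral identity used in the proof of Lemma~\ref{L:setdifferent}, now applied along a characteristic $\gamma$ from the dense family of~\eqref{item:secondo}, over a thin strip $\{\sigma\le t\le\tau,\ \gamma(t)\le x\le\gamma(t)+\varepsilon\}$. Dividing by $\varepsilon(\tau-\sigma)$ and passing first $\varepsilon\downarrow 0$ (using continuity of $u$ for the boundary terms) and then $\tau-\sigma\downarrow 0$ (using that $\bar x=\gamma(\bar t)$ is a Lebesgue point of $\gfr_E$), the left-hand side converges to $\gfr(\bar t,\bar x)$ by the Lagrangian hypothesis in~\eqref{item:secondo}, while the right-hand side converges to $\gfr_E(\bar t,\bar x)$. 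Since the image of the dense family meets the $\Ll^2$-full set of Lebesgue points of $\gfr_E$ on a set of full $\Ll^2$-measure, this forces $\gfr_E=\gfr$ $\Ll^2$-a.e., hence distributionally.

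For the converse \eqref{item:primo}$\Rightarrow$\eqref{item:secondo}, the hypothesis that $\infl(f)$ is made of isolated points makes $\clos(\infl(f))$ at most countable (a discrete subset of $\R$ cannot have a closure containing a non-empty perfect set), in particular $\Ll^1$-null, so Assumption~\eqref{ass:h} holds and Theorem~\ref{T:nonemptyint} yields a Borel function $\gfr_U$ that is simultaneously an Eulerian source and a Broad source. Since $\gfr$ is itself a continuous Eulerian source, $\gfr_U=\gfr$ $\Ll^2$-a.e. The crucial step will then be to upgrade this $\Ll^2$-a.e.\ identity to a $\Haus^1$-a.e.\ identity along \emph{every} characteristic curve $\gamma$. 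By the construction in Theorem~\ref{T:nonemptyint}, $\gfr_U=\gfr_B$ on $E\setminus D_{\gfr_E}$ and on $u^{-1}(\clos(\infl(f)))$, and the proof of Lemma~\ref{L:setdifferent} actually yields $\gfr_B=\gfr_E$ at every point satisfying the spatial Lebesgue condition---which, by continuity of $\gfr_E=\gfr$, is every point. Combining with the $\Haus^1$-negligibility of $D_{\gfr_E}$ along any characteristic (Lemma~\ref{L:vario}), of $\R^2\setminus E$ along any characteristic (\cite[Theorem~30]{file1ABC} and the definition of $E$), and of $u^{-1}(\clos(\infl(f)))$'s contribution to $\ddt(u\circ\gamma)$ (Lemma~\ref{L:setNull}), I expect $\gfr_U=\gfr$ $\Haus^1$-a.e.\ along every characteristic. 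Since $u\circ\gamma$ is Lipschitz by \cite[Theorem~30]{file1ABC}, the fundamental theorem of calculus will then give
\[
u(t_2,\gamma(t_2))-u(t_1,\gamma(t_1))=\int_{t_1}^{t_2}\gfr(t,\gamma(t))\,dt\qquad\text{for all }t_1<t_2,
\]
and continuity of $\gfr\circ\gamma$ will upgrade $u\circ\gamma$ to $C^1$ with classical derivative exactly $\gfr\circ\gamma$, along every characteristic and a fortiori along any dense family.

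The hardest step should be the first direction: setting up the Dafermos argument carefully enough to extract the identification $\gfr_E=\gfr$ at $\Ll^2$-a.e.\ point using only the Lagrangian ODE along the dense (but not complete) family of characteristics and the merely $L^\infty$ regularity of $\gfr_E$.
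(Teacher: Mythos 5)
Your plan for \eqref{item:primo}$\Rightarrow$\eqref{item:secondo} contains a step that is actually false, and the paper's own Example~\ref{Ex:cubico} is the counterexample. You propose to show $\gfr_U=\gfr$ $\Haus^1$-a.e.\ along \emph{every} characteristic and then conclude $u(t_2,\gamma(t_2))-u(t_1,\gamma(t_1))=\int_{t_1}^{t_2}\gfr(t,\gamma(t))\,dt$ for every characteristic. Take $f(u)=u^3$, $u(t,x)=\sqrt[3]{x}$, $\gfr\equiv 1$: the inflection point $0$ is isolated, \eqref{item:primo} holds, yet along $\gamma\equiv 0$ one has $u\circ i_\gamma\equiv 0$ and the identity fails. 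The flaw is in your use of Lemma~\ref{L:setNull}: it says the derivative of $u\circ i_\gamma$ vanishes on $(u\circ i_\gamma)^{-1}(\clos(\infl(f)))$, not that this set, or the restriction of $\gfr$ to it, is $\Haus^1$-negligible --- a characteristic may spend positive (here, full) time in $u^{-1}(\infl(f))\cap\{\gfr\neq 0\}$, and there $\gfr$ is \emph{not} the derivative. This is precisely why the statement only claims a \emph{dense family} of characteristics, and the entire content of the paper's Cases~2--3 is the selection of that family: one shows that a characteristic can meet $u^{-1}(\infl(f))\cap\{\gfr\neq0\}$ for a positive time only along exceptional segments, and one builds (via the $\max/\min$ extremal characteristics and, for countably many inflection values, the parity-of-$[\![u]\!]$ alternation) curves whose intersection with the bad set is discrete. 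Your argument skips this selection entirely. (Also, your reduction "isolated $\Rightarrow$ $\clos(\infl(f))$ countable" is not justified as stated: the closure of a discrete subset of $\R$ can contain a perfect set of positive measure, e.g.\ midpoints of the complementary intervals of a fat Cantor set.)

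The direction \eqref{item:secondo}$\Rightarrow$\eqref{item:primo}, which must hold with \emph{no} hypothesis on $\infl(f)$, also has two gaps as you set it up. First, the Dafermos relation~\eqref{E:ruibewgde} is not an identity: the lateral flux term is a second-order Taylor remainder of $f$, of size $O(\omega_u(\varepsilon)^2/\varepsilon)$ after rescaling, which does not vanish for merely continuous $u$; the paper only extracts a one-sided inequality from it under a sign condition on $f''$, so on $u^{-1}(\clos(\infl(f)))$ (possibly of positive $\Ll^2$-measure here) you get neither inequality. Second, even where the comparison works, you only identify $\gfr_E$ with $\gfr$ at points of the image of the dense family, which may well be $\Ll^2$-null (a countable family suffices for density), so "$\gfr_E=\gfr$ $\Ll^2$-a.e." does not follow. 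The paper avoids both issues by a completely different route: it builds the $\BV$ approximations $u_k$ of \cite[Lemma~27]{file1ABC} from the dense family, notes that their Eulerian sources $\gx_k$ differ from $\gfr$ by at most $\omega(\delta_k)$ off a set of vanishing measure (here the continuity of $\gfr$ is used), and passes to the limit in $\pt u_k+\px f(u_k)=\gx_k$ using uniform convergence of $u_k$ and $L^1$ convergence of $\gx_k$.
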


\begin{remark}\label{Rem:cubico}
We recall that~\eqref{item:secondo} is always~\cite[\S~A.1]{file1ABC} equivalent to
\begin{enumerate}\setcounter{enumi}{2}
\item \label{item:duebis} There exists a Lagrangian parameterization with the continuous Borel function $\gfr$ as associated Lagrangian source.
\end{enumerate} 
One could desire the stronger condition
\begin{enumerate}\setcounter{enumi}{3}
\item \label{item:terzo}  the classical derivative of $u$ along every characteristic curve along which $u$ is Lipschitz continuous identifies the continuous function $\gfr$.
\end{enumerate} 
{{}In Example~\ref{Ex:cubico}} considering $f(u)=u^3$ and $u(t,x)=\sqrt[3]x$ we realize that there is no hope: the continuous representative of the source term, if any, is not necessarily the Broad source, neither the Lagrangian one for all parameterizations.

{}In Theorem~\ref{T:continousSourceGen} assuming that inflection points of $f$ are isolated is not sharp, nevertheless it covers this meaningful and surprising case, still having a simple proof.
\end{remark}

\begin{example}\label{Ex:cubico}
Consider $f(u)=u^3$, $f'(u)=3u^{2}$ and $u(t,x)=\sqrt[3]x$ then
\[
f(u(t,x))=x
\qquad\text{so that}\qquad \pt u(t,x) + \px (f(u))= \px x\equiv 1
\]
 but $\gamma(t)\equiv 0$ is a characteristic, and $\ddt u(t,\gamma(t))\equiv 0\neq1$: the Broad source is the Borel function $\gfr(t,x)\doteq \uno_{\{x\neq0\}}$ and not the continuous function identically $1$.
 
 In particular, the continuous function identically $1$ is not the Lagrangian source for the Lagrangian parameterization $\chi:\R\times(-2,2)\to \R$ given by
 \[
 \chi(t,y)=
 \begin{cases}
 \left( t-\setttgh (y-1) \right)^{{}3} & \text{if }t\geq \setttgh (y-1)\,,\ y\in (0,2)\,,
 \\
 0& \text{if }t< \setttgh (y-1)\,,\ y\in (0,2)\,,
 \end{cases}
 \]
 and $\chi(t,y)=-\chi(-t,-y)$ if $-2<y<0$, while $\chi(t,0)=0$ for all $t$.
In the half-plane $x>0$, setting $y= 1+\tanh t_{0}$, $s=t-t_{0}$, characteristic curves are
\[
s\mapsto \left(t_{0}+ s,  s^{{}3}\right)
\qquad t_{0}\in\R\,, \ s>0\,,
\quad 
{}u(t,\gamma_{t_{0}}(t))=\sqrt[3]{(t-t_{0})^{3}}=t-t_{0}\,.
\]
\end{example}

\begin{proof}
\eqref{item:secondo}$\Rightarrow$\eqref{item:primo}. 
Consider the construction in the proof of \cite[Lemma 27]{file1ABC}.
The Lagrangian source terms $\gx_k$ of the $\BV$ approximations $u_k$ there defined are not continuous, but where they do not vanish they satisfy $|\gx_k-\gx|\leq \omega(\delta_n)$, being $\omega$ the modulus of uniform continuity of the Lagrangian source $\gx$ and $\delta_k=\max_{t\in[0,1]}\max_{j=1,\dots,k}\{|\bar\gamma_j(t)-\bar\gamma_{j-1}(t)|\}$. 
In particular, since the measure of the region where $\gx_k$ vanishes `because of cuts' converges to $0$ as $k\uparrow\infty$, we have that $\gx_k$ converges to the Lagrangian source $\gx$ of $u$ in $L^1$ when $\gx$ is continuous.
Since by~\cite[Lemma 22]{file1ABC} $\gx_k$ satisfies also $\pt u_k+\px f(u_k)=\gx_k$, and $u_k$ converges uniformly to $u$, then of course the countinuous function $\gx$ is an Eulerian source term for $u$.

\eqref{item:primo}$\Rightarrow$\eqref{item:secondo}. {{}Case 1}. The case when the flux is convex follows from passing to the limit in the first inequality of~\cite[(3.1b)]{file1ABC}, and in the analogous opposite one, which is immediate when the Eulerian source $ \gfr$ is continuous{{}: $\gfr$ is both broad and Eulerian}.

{{}Case 2: Single inflection point}.
By the previous case, one can identify source terms in the complement of $u^{-1}(\clos(\infl(f)))$, which is an open subset of $\R^2$. 
{{}Just to simplify notations, we assume that $0$ is the only inflection point of $f$---just consider $\widetilde f(u)=f(u+\overline u)$---with $f$ convex on $\{\overline u>0\}$ and concave on $\{u<0\}$; one can reduce to this situation considering $\widehat u(t,x)=u(t,-x)$ and $\widehat f(u)=-f(u)$.

Notice that $u$ is nondecreasing along characteristics lying in $\{ \gfr >0\}$, strictly increasing in $\{ \gfr >0\}\setminus u^{-1}(0)$.
It has the opposite monotonicity in $\{ \gfr <0\}$. 
In particular, if $\Haus^1\left(i_{\gamma}(\R)\cap u^{-1}(0)\cap\{ \gfr \neq 0\}\right) >0$ then the image of the characteristic $\gamma$ contains in $\{\gfr>0\}$ a single nontrivial segment with slope $f'(0)$; such segments can be parametrized by $r_{q}(t)=f'(0)t+q$ with $q\in Q$, $t\in(a_{q},b_{q})$: considering~\eqref{E:ecco}, by Fubini-Tonelli theorem necessarily $\Ll^{1}(Q)=0$ so that $Q$ has empty interior.

Consider a time $\overline t\in[a_{q},b_{q}]$, with $q\in Q$.
We claim that $u$ must change sign in any neighborhood of $P=(\overline t,r_{q}(\overline t))$. Suppose, for example $u$ is nonnegative in a neighborhood of $P$: then, in such neighborhood, $u$ is a solution of the same balance law with the convex flux $\widetilde f(u)=f(|u|)$.
Nevertheless, in Case 1 for a convex flux we proved that the continuous Eulerian source $\gfr\neq0$ must be the derivative of $u$ along all characteristics: this contradicts $u$ vanishing on the segment parametrized by $r_{q}$.

When $\gfr(P)\neq 0$, one can assume that $\gfr$ does not vanish in a neighborhood of $P$, for example fix $\gfr\geq\ell>0$ in a square $S$ centered in $P$ with side $4\rho>0$.

Since $u$ must change sign in any neighborhood of $P$, consider sequences $(t_{k}^{+}, x_{k}^{+})$, $(t_{k}^{-}, x_{k}^{-})$ converging to $(t,r_{q}(t))$ with $u(t_{k}^{+}, x_{k}^{+})>0$ and $u(t_{k}^{-}, x_{k}^{-})<0$.
Since $\ddt u\circ i_{\gamma}=\gfr\geq\ell>0$ for all characteristics $\gamma$ lying in $S\setminus u^{-1}(0)$, then for every characteristic $\gamma_{k}^{+}$ through $(t_{k}^{+}, x_{k}^{+})$ and $\gamma_{k}^{-}$ through $(t_{k}^{-}, x_{k}^{-})$ we have
\begin{equation}\label{E:eccoUffo}
u\circ i_{\gamma_{k}^{+}}(t_{k}^{+}+\rho)\geq u\circ i_{\gamma_{k}^{+}}(t_{k}^{+})+\ell\rho\,,\qquad
 u\circ i_{\gamma_{k}^{-}}(t_{k}^{-})\geq u\circ i_{\gamma_{k}^{-}}(t_{k}^{-}-\rho)+\ell\rho
\end{equation}
provided that $(t_{k}^{+}, x_{k}^{+})$, $(t_{k}^{-}, x_{k}^{-})$ are sufficiently close to $P$.

By Ascoli-Arzlea,  and by continuity of $u$ and $f'(u)$, the sequences $\{\gamma_{k}^{+}\}_{k\in\N}$ and $\{\gamma_{k}^{-}\}_{k\in\N}$ admit subsequences converging to limit characteristics $\gamma^{+}$, $\gamma^{-}$ through $P$ locally uniformly. We can exhibit the following characteristic through $P$:
\[
\overline \gamma(t)=\begin{cases}\gamma^{+}(t)&t\geq \overline t\,,\\ \gamma^{-}(t)&t\leq \overline t\,,\end{cases}
\qquad\text{so that } i_{\overline\gamma}(I)\cap u^{-1}(0)\cap\{ \gfr >0\}=\{P\}
\]
where $I$ is the connected component of $i_{\overline\gamma}^{-1}(\{ \gfr \neq 0\})$ which contains $\overline t$.
Since we are considering $f$ convex in $\{u>0\}$ and concave in $\{u<0\}$ then $\overline \gamma$ lies on the right of the line parametrized by $r_{q}$ for $t>\overline t$, and on the left for $t<\overline t$.
Because of this analysis, define more in general through every point $P$ the characteristic
\begin{equation}\label{E:goodCharact}
\gamma(t)
=
\begin{cases}
\max\{\gamma(t)\ :\ \gamma\text{ characteristic with }\gamma(\overline t)=\overline x  \} &\text{if }t\geq \overline t
\\
\min\{\gamma(t)\ :\ \gamma\text{ characteristic with }\gamma(\overline t)=\overline x  \} & \text{if }t\leq \overline t
\end{cases}
\end{equation}
for $t\in\R$. It is as required in item~\eqref{item:secondo}, as $i_{\overline\gamma}(\R)\cap u^{-1}(0)\cap \{ \gfr \neq 0\}$ is discrete.

When $f$ is convex in $\{u<0\}$ and concave in $\{u>0\}$ the expression of `good' characteristics is similar but maximum and minimum in~\eqref{E:goodCharact} are exchanged.
}

Case 3. Suppose {{} $ \infl(f)$ is discrete}. 
Lemma~\ref{L:setNull} proves that the Eulerian source $\gfr$ vanishes at $\Ll^2$-Lebesgue points of $u^{-1}(\clos(\infl(f)))$, so that
\begin{equation}\label{E:ecco}
\Ll^{2}\left(u^{-1}(\clos(\infl(f)))\cap\{ \gfr \neq0\}\right) =0\,.
\end{equation}
Lemma~\ref{L:setNull} also proves that the Lagrangian source term vanishes at $\Haus^1$-Lebesgue points of $(u\circ i_{\gamma })^{-1}(\clos(\infl(f)))$, for every characteristic curve $i_{\gamma}$. In particular, denoting by $\gfr$ the Eulerian source, also by Case 1, the Broad and the Eulerian source terms are the same $\Haus^1$-a.e.~on characteristic curves $i_{\gamma}:I\to\R^{2}$ such that
\begin{equation}\label{E:soifowgfehgrtherge}
\Haus^1\left(i_{\gamma}(I)\cap u^{-1}(\clos(\infl(f)))\cap\{ \gfr \neq0\}\right) =0\,.
\end{equation}
We now prove that such characteristic curves are dense in the plane, thus~\eqref{item:secondo} holds.

{}
Since we are assuming that $ \infl(f)$ is discrete, up to a rescaling fix e.~g.~that 
\begin{itemize}
\item $f$ is concave in $(2i- \sfrac{1}{2}, 2i+\sfrac{1}{2})$, $m_{i}=i+ \sfrac{1}{2}$ are inflection points, and 
\item $f$ is convex in $(2i+ \sfrac{1}{2}, 2i+\sfrac{3}{2})$, for $i\in\Z$.
\end{itemize}
We construct the desired characteristic through any point $(\bar t,\bar x)$ by approximation.

For $h,n\in\N$ consider times $t_{n,h}^{+}=\overline t+(h-1)/n$ and $t_{n,h}^{-}=\overline t-(h-1)/n$.
Define $\gamma_{n}(\overline t)=\gamma_{n}(t_{n,0}^{+})=\gamma_{n}(t_{n,0}^{-})=\overline x$.
Once defined $x_{n}^{+}\doteq\gamma_{n}(t_{n,h}^{+})$ and $x_{n}^{-}\doteq\gamma_{n}(t_{n,h}^{-})$, set
\begin{itemize}
\item in $(t_{n,h}^{+},t_{n,h+1}^{+}]$, for $h\in\N$: denoting $u_{n,h}^{+}\doteq u(t_{n,h}^{+},x_{n,h}^{+})$
\[
\gamma_{n}(t)
=
\begin{cases}
\max\{\gamma(t)\ :\ \gamma\text{ characteristic with }\gamma_{n}(t_{n,h}^{+})= x_{n,h}^{+} \} & \text{if }[\![u_{n,h}^{+}]\!] \text{ even,}
\\
\min\{\gamma(t)\ :\ \gamma\text{ characteristic with }\gamma_{n}(t_{n,h}^{+})= x_{n,h}^{+} \} & \text{if }[\![u_{n,h}^{+}]\!] \text{ odd}.
\end{cases}
\]
We denoted by $[\![\cdot]\!]$ the integer part of a number.
\item in $[t_{n,h+1}^{-},t_{n,h}^{-})$, for $h\in\N$: denoting $u_{n,h}^{-}\doteq u(t_{n,h}^{-},x_{n,h}^{-})$
\[
\gamma_{n}(t)
=
\begin{cases}
\min\{\gamma(t)\ :\ \gamma\text{ characteristic with }\gamma_{n}(t_{n,h}^{-})= x_{n,h}^{-} \} & \text{if }[\![u_{n,h}^{-}]\!] \text{ even,}
\\
\max\{\gamma(t)\ :\ \gamma\text{ characteristic with }\gamma_{n}(t_{n,h}^{-})= x_{n,h}^{-} \} & \text{if }[\![u_{n,h}^{-}]\!] \text{ odd}.
\end{cases}
\]
\end{itemize}
By compactness, one can extract a limit characteristic curve $\overline\gamma$ satisfying $\overline\gamma(\overline t)=\overline x$.

Recall that connected components of $\R\setminus\N$ contain a single inflection point of $f$.
When $i_{\gamma_{n}}([t_{n,h},t_{n,h+\ell}])$ lies within some $u^{-1}(\R\setminus\N)\cap\{\gfr\neq0\}$ then $i_{\overline\gamma}([t_{n,h},t_{n,h+\ell}])\cap u^{-1}(\inf(f))\cap\{ \gfr \neq0\}$ is at most a point by the analysis of Case~2.
Let $R>0$.
When $\sfrac2n\cdot\lVert f'(u)\rVert_{L^\infty([-R,R]^{2})}$ is less than the distance in the square $[-R,R]^{2}$ among the closed sets $u^{-1}(\N)$, $u^{-1}(\inf(f))$, then $\gamma_{n}$ has discrete intersection with $u^{-1}(\inf(f))\cap\{ \gfr \neq0\}$: for $n$ large enough, depending on $R>0$, then
\[
u(t,\gamma_{n}(t))-u(s,\gamma_{n}(s))=\int_{s}^{t}\gfr(\tau,\gamma_{n}(\tau)\,d\tau
\qquad \forall s,t\in[-R,R].
\]
By the local uniform convergence of $\gamma_{n_{j}}$, we conclude that $\overline\gamma$ is as wanted in~\eqref{item:secondo}.

\end{proof}

We stress that Counterexample~\ref{Ex:Lagrnobroad} shows that if the flux has inflection points of positive measure the correspondence from Eulerian to Lagrangian sources, even when both well defined, is not perfect regardless of the continuity of the source: differently from the case of negligible inflection points, there is no Borel function which works for two given different Lagrangian parameterizations, even when we are considering the case of continuous Eulerian source terms.
It is not clear if the weaker conditions~\eqref{item:secondo}-\eqref{item:duebis} still hold even for fluxes with non-negligible inflection points, when the Eulerian source is continuous.

\vskip\baselineskip
We also collect here properties of the solution, depending on the assumptions:
\\[1.2\baselineskip]
\begin{tabular}{p{4.2cm}|c|c|c}
&$\ell$-convexity & Negligible inflections & General case\\
\hline
absolutely continuous La\-gran\-gian pa\-ra\-me\-terization & \ding{55} (\S~\ref{S:Cantoparam}) & \ding{55} & \ding{55}\\
\hline
$u$ H\"older continuous & \ding{51} (\cite[Th.~1.2]{estratto})&\ding{55} (\S~\ref{Ss:nondifferentiabilityset})& \ding{55} \\
\hline
$u$ $\Ll^{2}$-a.e.~differentiable along characteristic curves &\ding{51}(\cite[Th.~1.2]{estratto}) &\ding{55} (\S~\ref{Ss:nondifferentiabilityset})& \ding{55}\\
\hline
$u$ Lipschitz continuous along characteristic curves & \ding{51}& \ding{51} \cite[Th.~30]{file1ABC}& \ding{55} (\S~\ref{Ss:Notlipanlongchar})\\
\hline
entropy equality  & \ding{51} & \ding{51} & \ding{51}(\cite[Lemma~42]{file1ABC})\\
\hline
compatibility of sources  & \ding{51} \ding{51} (\cite[Th.3.1]{estratto}) & \ding{51} (\S~\ref{S:compatibilitysources})& 
\end{tabular}
\vskip\baselineskip

The picture of $\ell$-convex fluxes is similar to the one of $\ell$-nonlinear fluxes \cite{CMP}, where one assumes that the flux $f$ has at each point a non-vanishing derivative of order between $2$ and $\ell$.
This is the case of analytic fluxes.

\printnomenclature


	%
	%

\section*{Acknowledgements}
	%
	%
The authors are partially supported by the Gruppo Nazionale per l’Analisi Matematica, la Probabilit\`a e le loro Applicazioni (GNAMPA) of the Istituto Nazionale di Alta Matematica (INdAM), and by the PRIN
2020 ``Nonlinear evolution PDEs, fluid dynamics and transport equations: theoretical foundations and application''.
L.~C.~is partially supported by the PRIN PNRR P2022XJ9SX of the European Union -- Next Generation EU.

	%
	%
	%
	%
\bibliographystyle{plain}

	%
	%
	%
	%
\vskip .5 cm

{\parindent = 0 pt\footnotesize
G.A.
\\
Dipartimento di Matematica, 
Universit\`a di Pisa,
largo Pontecorvo~5, 
56127 Pisa, 
Italy 
\\
e-mail: \texttt{giovanni.alberti@unipi.it}

\bigskip
S.B.
\\
SISSA, 
via Bonomea 265, 
34136 Trieste, 
Italy 
\\
e-mail: \texttt{stefano.bianchini@sissa.it}

\bigskip
L.C.
\\
Dipartimento di Matematica, 
Universit\`a di Padova,
via Trieste~63, 
35121 Padova,
Italy 
\\
e-mail: \texttt{laura.caravenna@unipd.it}

}

\end{document}